\providecommand{\tabularnewline}{\\}
\numberwithin{equation}{section}
\numberwithin{figure}{section}
\begin{document}

\title{Bernstein dual-Petrov-Galerkin method: application to 2D time fractional
diffusion equation}

\author{M. Jani \and S. Javadi \and E. Babolian \and D. Bhatta}

\institute{M. Jani \and S. Javadi \and  E. Babolian \at Department of Mathematics,
Faculty of Mathematical Sciences and Computer, Kharazmi University,
Tehran, Iran\\
\email{mostafa.jani@gmail.com javadi@khu.ac.ir babolian@khu.ac.ir.}
\\
\and D. Bhatta \at School of Mathematical \& Statistical Sciences,
The University of Texas Rio Grande Valley, 1201 West University Drive,
Edinburg, TX, USA\\
\email{dambaru.bhatta@utrgv.edu} \\
}
\maketitle
\begin{abstract}
In this paper, we develop a Bernstein dual-Petrov-Galerkin method
for the numerical simulation of a two-dimensional fractional diffusion
equation. A spectral discretization is applied by introducing suitable
combinations of dual Bernstein polynomials as the test functions and
the Bernstein polynomials as the trial ones. We derive the exact sparse
operational matrix of differentiation for the dual Bernstein basis
which provides a matrix based approach for the spatial discretization.
It is shown that the method leads to banded linear systems that can
be solved efficiently. The stability and convergence of the proposed
method is discussed. Finally, some numerical examples are provided
to support the theoretical claims and to show the accuracy and efficiency
of the method.

\keywords{Fractional PDEs \and Bernstein polynomials \and 2D Subdiffusion
\and dual-Petrov-Galerkin \and Dual Bernstein basis \and Operational
matrix } \subclass{41A10 \and 65M22 \and 35R11 \and 76M22}
\end{abstract}

\section{Introduction}

Bernstein polynomial basis plays an important role in computer graphics
for geometric modeling, curve and surface approximation. Some interesting
features have been investigated for this basis in the last decades;
for instance, it is proven to be an optimal stable basis among nonnegative
bases in a sense described in \citep{farouki2-1996}. Also, it provides
some optimal shape preserving features \citep{carnicer}. We refer
to \citep{farin2002,farouki1988,Farouki1999} for detailed properties
and applications in computer aided geometric design (CAGD). 

Bernstein basis has also been used for the numerical solution of differential,
integral, integro-differential and fractional differential equations,
see e.g. \citep{Behiry,javadi,JavadiJaniBabolian,maleknejad,saadatmandi}
and the references therein. However, it is not orthogonal and so leads
to dense linear systems when using in numerical methods. Some numerical
approaches implement the orthogonalized Bernstein basis. However,
as we will see in the next section, it fails to keep some interesting
properties of the Bernstein basis. Another approach uses the dual
Bernstein polynomials (DBP) introduced by Juttler in 1998 \citep{juttler}.
To the best of our knowledge, the DBP basis has been only discussed
from the CAGD point of view (see the works of Lewanowicz and Wozny
e.g. \citep{Lewanowicz,wozny}). So it is of interest to explore some
new aspects of this basis in order to facilitate the numerical methods
for differential equations that are based on Bernstein polynomials
and to present a method for time fractional diffusion equation in
two dimensions.

Fractional partial differential equations (FPDEs) have been widely
used for the description of some important physical phenomena in many
applied fields including viscoelastic materials, control systems,
polymer, electrical circuits, continuum and statistical mechanics,
etc., see, for instance \citep{Dabiri,Goychuk,metzler2004,Moghaddam,YangMachado}
and the references therein. The subdiffusion equation is a FPDE describing
the behavior of anomalous diffusive systems with the probability density
of particles diffusing proportional to the mean square displacement
$\chi^{2}(t)\propto t^{\alpha}$ with $0<\alpha<1$ \citep{Gao2012FDM}.
Anomalous diffusion equations have been used for modeling transport
dynamics, especially the continuous time random walk, the contamination
in porous media, viscoelastic diffusion, etc \citep{Gao2012FDM,Gao2015,Goychuk,metzler2004,Wang2016}.
For the numerical solution of the one-dimensional problem, we refer
to \citep{JinLazaroV2016,Ren,Stok,zhou} and the references therein.
Some classic numerical methods for PDEs have been developed for the
simulation of two-dimensional subdiffusion equation, for example the
finite difference schemes \citep{Gao2015,pang,Ren}, meshless methods
\citep{Shirzadi,Yang}, finite element method \citep{ZhaoFEM}, alternating
direction implicit methods \citep{ZhangADI,ZhangCompactADI}, etc.

In this paper, deriving some new aspects of DBPs, we present suitable
combinations of these functions in order to develop a dual-Petrov-Galerkin
method for solving the following 2D subdiffusion equation \citep{Wang2016,Yang,Yang2014,ZhangADI,ZhangCompactADI,ZhaoFEM}
\begin{equation}
{D_{t}^{\alpha}}u\left(x,y,t\right)=\kappa\Delta u\left(x,y,t\right)+S\left(x,y,t\right),\quad\left(x,y,t\right)\in\Omega\times\left(0,T\right],\label{eq:main}
\end{equation}
with the following initial and boundary conditions 
\begin{align}
 &u\left(x,y,0\right)=g\left(x,y\right),\quad(x,y)\in\Omega,\label{IV}\\
 &u\left(x,y,t\right)=0,\quad\left(x,y,t\right)\in\partial\Omega\times\left(0,T\right],\label{BVs}
\end{align}
where $\Omega=\left(0,1\right)^{2}\subset\mathbb{R}^{2}$, $\Delta$
is the Laplacian operator, $T>0$, $\kappa$ is the diffusion coefficient
and $S$ is the source function. Here, ${D_{t}^{\alpha}}u$ denotes
the Caputo fractional derivative of order $\alpha,$ $0<\alpha<1$,
with respect to $t$ defined as
\begin{equation}
 {D_{t}^{\alpha}}u\left(x,t\right)=\frac{1}{\Gamma\left(1-\alpha\right)}\int_{0}^{t}{\frac{1}{\left(t-s\right){}^{\alpha}}\frac{\partial u\left(x,s\right)}{\partial s}ds},\quad0<\alpha<1.\label{eq:fracDef}
\end{equation}

The main contribution of our work is the development of an accurate
Bernstein dual-Petrov-Galerkin method and the application for the
numerical simulation of the 2D subdiffusion equation. It is shown
the method leads to sparse linear systems. To give a matrix approach
of the method, we present some results concerning the DBPs including
a recurrence formula for the derivative, constructing a new basis
using DBPs, deriving the operational matrix of differentiation and
also providing the transformation matrices between the DBPs and the
new basis.

The paper is organized as follows: Section \ref{sec:Bern=000026Dual}
presents some new aspects of DBPs and provides modal basis functions
and the associated transformation matrices between the bases. Section
\ref{sec:variationalFormulation} is devoted to the Bernstein-spectral
formulation of the subdiffusion problem (\ref{eq:main})-(\ref{BVs})
and the stability and convergence results are discussed in Section
\ref{sec:Error-estimation}. Numerical examples are provided in Section
\ref{sec:Numerical-examples}. The paper ends with some concluding
remarks in Section \ref{sec:Con}.

\section{\label{sec:Bern=000026Dual}Bernstein polynomials and DBPs}

The Bernstein polynomials with degree $N$ on the unit interval are
defined by 
\begin{equation*}
\phi_{i}(x)=\dbinom{N}{i}x^{i}\left(1-x\right){}^{N-i},\qquad0\leq i\leq N.
\end{equation*}
The set $\left\{ \phi_{i}\left(x\right):\,i=0,\dots,N\right\} $ forms
a basis for $\mathbb{P}_{N}$, the space of polynomials of degree
not exceeding $N$\@.

These polynomials possess end-point interpolation property, i.e.,
\begin{equation}
 \phi_{i}\left(0\right)=\delta_{i,0},\quad\phi_{i}\left(1\right)=\delta_{i,N},\quad0\leq i\leq N,\,N>0.\label{eq:BernBound}
\end{equation}
Also, the summation is one and the integral over the unit interval
is constant, namely 
\begin{equation}
 \sum_{i=0}^{N}{\phi_{i}(x)}\equiv 1,\quad\int_{0}^{1}{\phi_{i}(x)}=\frac{1}{N+1},\,i=0,1,\dots,N.\label{eq:sumAndIntOfBern}
\end{equation}
The derivative enjoys the three-term recurrence relation \citep{janiNA}
\begin{equation}
 \phi_{i}^{\prime}\left(x\right)=\left(N-i+1\right)\phi_{i-1}\left(x\right)-\left(N-2i\right)\phi_{i}\left(x\right)-\left(i+1\right)\phi_{i+1}\left(x\right),\quad0\leq i\leq N,\label{eq:BernDeriv}
\end{equation}
where we adopt the convention that $\phi_{i}\left(x\right)\equiv0$
for $i<0$ and $i>N$\@. 

As we mentioned in the preceding section, the Bernstein basis is not
orthogonal. The corresponding orthogonalized basis, obtained e.g.,
by the Gram-Schmidt process fails to keep some interesting aspects
of the original basis. We will not consider this basis in the present
work. Instead we turn to the dual basis.

The DBPs are defined as
\begin{equation}
\tilde{\psi}_{i}\left(x\right)=\sum_{j=0}^{N}c_{i,j}\phi_{j}\left(x\right),\label{eq:Dual}
\end{equation}
with the coefficients given by 
\begin{equation}
c_{i,j} =  \frac{(-1)^{i+j}}{\binom{N}{i}\binom{N}{j}}\sum_{r=0}^{\min\left(i,j\right)}\left(2r+1\right)\binom{N+r+1}{N-i}\binom{N-r}{N-i}\binom{N+r+1}{N-j}\binom{N-r}{N-j}.\label{eq:DualCoefficients}
\end{equation}
It is verified that they satisfying the biorthogonal system {[}\citealp{juttler},
Theorem 3{]}
\begin{equation}
\int_{0}^{1}\phi_{i}\left(x\right)\tilde{\psi}_{j}\left(x\right)dx = \delta_{ij},\quad0\leq i,j\leq N.\label{eqBiorthogonality}
\end{equation}
It is worth noting that less than a quarter of the entries of transformation
matrix between the Bernstein and dual Bernstein basis $C=[c_{i,j}]$,
are to be computed directly by (\ref{eq:DualCoefficients}); for it
is bisymmetric, i.e., symmetric about both of the main diagonal and
antidiagonal. 

Another property which is used later is that the sum of the entries
for each row (column) is equal to the order of the matrix, i.e.,
\begin{equation}
 \sum_{i=0}^{N}{c_{i,j}}=\sum_{j=0}^{N}{c_{i,j}}=N+1.\label{eq:SumOfRowIsConstant}
\end{equation}
In the next lemma, we present some properties of the DBPs.
\begin{lemma}
\label{Dual properties}Let $N$ be a nonnegative integer. The following
statements hold.

(i) For all $x\in[0,1]$, $\tilde{\psi}_{N-i}\left(x\right)=\tilde{\psi}_{i}\left(1-x\right)$,
$0\leq i\leq N$\@.

(ii) For all $x\in[0,1]$, $\sum_{i=0}^{N}{\tilde{\psi}_{i}\left(x\right)}=N+1.$

(iii) The basis functions have the same definite integral, i.e.,$\int_{0}^{1}\tilde{\psi}_{i}\left(x\right)dx=1,\quad0\leq i\leq N$.
\end{lemma}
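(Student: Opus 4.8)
The plan is to obtain all three items directly from the definition (\ref{eq:Dual}) of the DBPs, the biorthogonality relation (\ref{eqBiorthogonality}), the elementary Bernstein identities (\ref{eq:sumAndIntOfBern}), the row/column sum property (\ref{eq:SumOfRowIsConstant}), and the stated bisymmetry of the matrix $C=[c_{i,j}]$; each part reduces to a short manipulation of finite sums, so no deep idea is needed.

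For (i), I would first record the reflection symmetry of the Bernstein basis, $\phi_j(1-x)=\phi_{N-j}(x)$, which is immediate from $\phi_j(x)=\binom{N}{j}x^j(1-x)^{N-j}$. Inserting this into $\tilde{\psi}_i(1-x)=\sum_{j=0}^N c_{i,j}\phi_j(1-x)$ and re-indexing by $j\mapsto N-j$ yields $\tilde{\psi}_i(1-x)=\sum_{j=0}^N c_{i,N-j}\phi_j(x)$. It then remains to identify $c_{i,N-j}$ with $c_{N-i,j}$: this follows by composing the two reflection symmetries guaranteed by bisymmetry, namely $c_{p,q}=c_{q,p}$ (main diagonal) and $c_{p,q}=c_{N-p,N-q}$ (center/antidiagonal). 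Comparing the resulting expression with (\ref{eq:Dual}) written for the index $N-i$ gives the claim. An alternative route avoids even mentioning $C$: since $\{\phi_j\}$ is a basis and the bilinear pairing in (\ref{eqBiorthogonality}) is nondegenerate, the dual family is unique, so it suffices to check that $x\mapsto\tilde{\psi}_i(1-x)$ pairs with $\phi_j$ to give $\delta_{j,N-i}$; this drops out of (\ref{eqBiorthogonality}) after the substitution $x\mapsto1-x$ and the reflection symmetry of $\phi_j$, and as a by-product re-derives the bisymmetry of $C$.

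For (ii), I would interchange the two finite sums in $\sum_{i=0}^N\tilde{\psi}_i(x)=\sum_{i=0}^N\sum_{j=0}^N c_{i,j}\phi_j(x)$, collapse the inner sum to $N+1$ using the column-sum identity (\ref{eq:SumOfRowIsConstant}), and then invoke the partition-of-unity part of (\ref{eq:sumAndIntOfBern}) to conclude $\sum_i\tilde{\psi}_i(x)=(N+1)\sum_j\phi_j(x)=N+1$. For (iii), there are two equally short options: integrate (\ref{eq:Dual}) termwise and combine $\int_0^1\phi_j(x)\,dx=1/(N+1)$ with (\ref{eq:SumOfRowIsConstant}); or, more slickly, substitute $1=\sum_{j=0}^N\phi_j(x)$ into $\int_0^1\tilde{\psi}_i(x)\,dx$ and apply biorthogonality (\ref{eqBiorthogonality}) directly, so the integral telescopes to $\sum_{j=0}^N\delta_{ij}=1$.

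The only place that calls for a little care is the index bookkeeping in part (i), and in particular making explicit that ``bisymmetry'' supplies precisely the two identities $c_{i,j}=c_{j,i}$ and $c_{i,j}=c_{N-j,N-i}$ that are used there; everything else is termwise re-summation plus a single change of variables, and I do not anticipate any genuine obstacle.
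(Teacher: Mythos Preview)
Your proposal is correct and follows essentially the same route as the paper: part (ii) is exactly the interchange-of-sums argument the paper gives, part (iii) is what the paper dismisses as ``verified similarly,'' and part (i) rests on the Bernstein reflection $\phi_{N-j}(x)=\phi_j(1-x)$ together with the bisymmetry of $C$, which is precisely the mechanism behind the paper's one-line justification. If anything, your write-up is more careful than the paper's, since you make explicit the step $c_{i,N-j}=c_{N-i,j}$ (and offer the alternative uniqueness-of-duals argument), whereas the paper simply declares (i) an ``immediate consequence'' of the Bernstein symmetry.
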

\begin{proof}
The first statement is an immediate consequence of the similar formula
for Bernstein polynomials, i.e., $\phi_{N-i}(x)=\phi_{i}(1-x).$ From
(\ref{eq:Dual}), (\ref{eq:SumOfRowIsConstant}) and (\ref{eq:sumAndIntOfBern}),
we have 
\begin{align*}
\sum_{i=0}^{N}{\tilde{\psi}_{i}\left(x\right)} & =  \sum_{i=0}^{N}{\sum_{j=0}^{N}c_{i,j}\phi_{j}\left(x\right)}\\
 & =  \sum_{j=0}^{N}{\phi_{j}\left(x\right)\sum_{i=0}^{N}c_{i,j}}=N+1.
\end{align*}
statement (iii) is also verified similarly.
\end{proof}

The property (i) implies that $\tilde{\psi}_{i}$, for $\left[\frac{N}{2}\right]+1\leq i\leq N,$
need not to be computed directly by (\ref{eq:Dual})-(\ref{eq:DualCoefficients}).
It especially gives $\tilde{\psi}_{i}\left(0\right)=\tilde{\psi}_{N-i}\left(1\right)$.

\subsection{Modal basis functions}

One may choose a suitable compact combinations of orthogonal polynomials
as the trial and test basis for the Galerkin and Petrov-Galerkin methods
for BVPs in such a way leading to sparse linear systems for some special
problems (see e.g., \citep{GoubetShenDual,YuanShenDual}). Here, we
use this idea for the non-orthogonal Bernstein polynomials to present
a simple and accurate dual-Petrov-Galerkin spectral method for two-dimensional
subdiffusion equation. Following Shen, et. al. {[}\citealp{GoubetShenDual}
and \citealp{Shen}, Section 1.3{]}, we will refer to such basis functions
as the modal basis functions.
\begin{proposition}
\label{Prop 1. vanishPsi}Let $N\geq2$ be an integer, $\{\tilde{\psi}_{i}:\,0\leq i\leq N\}$
be the dual Bernstein basis and $\mathbb{P}_{N}^{0}=\{u\in\mathbb{P}_{N}:\,u(0)=0,u(1)=0\}.$
Set 
\begin{equation}
\psi_{i}\left(x\right)  =  \tilde{\psi}_{i}\left(x\right)+a_{i}\tilde{\psi}_{i+1}\left(x\right)+b_{i}\tilde{\psi}_{i+2}\left(x\right),\label{eq:CompactBasis}
\end{equation}
for $0\leq i\leq N-2,$ where 
\begin{align}
a_{i} & =  \frac{2i+4}{N-i+1},\nonumber \\
b_{i} & =  \frac{(i+2)(i+3)}{(N-i)(N-i+1)}.\label{eq:a=00005Bi=00005D and b=00005Bi=00005D}
\end{align}
Then, the polynomials $\tilde{\psi}_{i}(x)$ vanish at 0 and 1, so
the set $\{\psi_{i}\left(x\right)\}_{i=0}^{N-2}$ forms a basis for
$\mathbb{P}_{N}^{0}$\@.
\end{proposition}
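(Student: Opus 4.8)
The plan is to show two things: first, that each $\psi_i$ lies in $\mathbb{P}_N^0$, i.e.\ vanishes at the endpoints $0$ and $1$; and second, that the $N-1$ functions $\{\psi_i\}_{i=0}^{N-2}$ are linearly independent, which suffices since $\dim \mathbb{P}_N^0 = N-1$. (I note in passing that the statement as written says ``the polynomials $\tilde\psi_i(x)$ vanish at $0$ and $1$'' but clearly means $\psi_i$; I would correct this typo.)

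For the endpoint vanishing, the key is to get closed-form expressions for $\tilde\psi_i(0)$ and $\tilde\psi_i(1)$. From \eqref{eq:Dual} and the end-point interpolation property \eqref{eq:BernBound} of the Bernstein basis, $\tilde\psi_i(0) = \sum_j c_{i,j}\phi_j(0) = c_{i,0}$ and $\tilde\psi_i(1) = c_{i,N}$. Now specialize the coefficient formula \eqref{eq:DualCoefficients}: for $j=0$ the inner sum runs over $r=0$ only, giving $c_{i,0} = (-1)^i \binom{N+1}{N-i}\binom{N}{N-i}/\binom{N}{i} = (-1)^i\binom{N+1}{i+1}$ after simplifying the binomials (using $\binom{N}{N-i}=\binom{N}{i}$ and $\binom{N+1}{N-i}=\binom{N+1}{i+1}$). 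By the bisymmetry of $C$ noted after \eqref{eq:DualCoefficients}, or equivalently by Lemma~\ref{Dual properties}(i), $c_{i,N} = c_{N-i,0} = (-1)^{N-i}\binom{N+1}{i}\cdot(\text{sign/index bookkeeping})$; I would just write $c_{i,N}=\tilde\psi_i(1)=\tilde\psi_{N-i}(0)=c_{N-i,0}$. Substituting into \eqref{eq:CompactBasis} gives $\psi_i(0) = c_{i,0} + a_i c_{i+1,0} + b_i c_{i+2,0}$, and similarly for $\psi_i(1)$; plugging in the explicit $a_i, b_i$ from \eqref{eq:a=00005Bi=00005D and b=00005Bi=00005D} and the formula $c_{k,0} = (-1)^k\binom{N+1}{k+1}$, the bracket becomes
\[
(-1)^i\binom{N+1}{i+1}\left(1 - \frac{2i+4}{N-i+1}\cdot\frac{N-i}{i+2} + \frac{(i+2)(i+3)}{(N-i)(N-i+1)}\cdot\frac{(N-i)(N-i-1)}{(i+2)(i+3)}\right),
\]
using $\binom{N+1}{i+2}/\binom{N+1}{i+1} = (N-i)/(i+2)$ and the analogous ratio for $i+3$. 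This collapses to $(-1)^i\binom{N+1}{i+1}\cdot\frac{1}{N-i+1}\big((N-i+1) - 2(N-i) + (N-i-1)\big) = 0$. The endpoint value at $1$ is handled by the same computation after the reflection $x\mapsto 1-x$, $i\mapsto N-i$, using Lemma~\ref{Dual properties}(i); one checks $a_{N-i-2}$ and $b_{N-i-2}$ match up correctly so that $\psi_i(1-x)$ is a combination of the reflected $\psi$'s, or one simply recomputes directly. The main obstacle here is purely the binomial bookkeeping in deriving and simplifying $c_{k,0}$ and the ratio identities; it is routine but must be done carefully to get the telescoping coefficients $1, -2, 1$.

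For linear independence, I would argue by triangularity. Writing $\psi_i = \tilde\psi_i + a_i\tilde\psi_{i+1} + b_i\tilde\psi_{i+2}$ for $i=0,\dots,N-2$, the matrix expressing $(\psi_0,\dots,\psi_{N-2})$ in terms of the independent set $(\tilde\psi_0,\dots,\tilde\psi_N)$ is $(N-1)\times(N+1)$, lower-triangular in the sense that $\psi_i$ has leading term $\tilde\psi_i$ with unit coefficient and involves only indices $\geq i$. Equivalently, a vanishing combination $\sum_{i=0}^{N-2}\lambda_i\psi_i = 0$ forces, by reading off the coefficient of $\tilde\psi_0$, that $\lambda_0 = 0$, then successively $\lambda_1 = 0$, etc. Hence the $\psi_i$ are linearly independent. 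Since they all lie in $\mathbb{P}_N^0$ and $\dim\mathbb{P}_N^0 = N-1$ (it is the kernel of the two linearly independent evaluation functionals $u\mapsto u(0)$, $u\mapsto u(1)$ on $\mathbb{P}_N$, which has dimension $N+1$), they form a basis. This second part is essentially immediate and carries no real difficulty; the whole weight of the proof is in the first part's endpoint computation.
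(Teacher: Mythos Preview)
Your proof is correct and follows essentially the same route as the paper: compute $\tilde\psi_i(0)=c_{i,0}$ explicitly from \eqref{eq:DualCoefficients}, obtain $\tilde\psi_i(1)$ via the reflection in Lemma~\ref{Dual properties}(i), substitute into \eqref{eq:CompactBasis} and verify the vanishing, then conclude by linear independence and a dimension count. One small slip to fix in your write-up: in specializing \eqref{eq:DualCoefficients} at $j=0$ you dropped the factor $\binom{N+1}{N}\binom{N}{N}=N+1$, so the correct value is $c_{i,0}=(-1)^i(N+1)\binom{N+1}{i+1}$ (as the paper has in \eqref{eq:DualValue0}); this is a common scalar in the bracket and does not affect your ratio computation or the conclusion.
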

\begin{proof}
By (\ref{eq:Dual}) and (\ref{eq:BernBound}), we have
\begin{equation}
\tilde{\psi}_{i}\left(0\right)  =\sum_{j=0}^{N}c_{i,j}\phi_{j}\left(0\right)=  c_{i,0}=\left(-1\right)^{i}\left(N+1\right)\binom{N+1}{i+1}.\label{eq:DualValue0}
\end{equation}
From Lemma \ref{Dual properties}, we infer 
\begin{equation}
\tilde{\psi}_{i}\left(1\right)=\tilde{\psi}_{N-i}\left(0\right)=\left(-1\right)^{N-i}\left(N+1\right)\binom{N+1}{i}.\label{eq:DualValue1}
\end{equation}
By (\ref{eq:DualValue0}) and (\ref{eq:DualValue1}), we obtain
\begin{align*}
 & \psi_{i}\left(0\right)=\left(-1\right)^{i}\left(N+1\right)\left(\binom{N+1}{i+1}-a_{i}\binom{N+1}{i+2}+b_{i}\binom{N+1}{i+3}\right)=0,\\
 & \psi_{i}\left(1\right)=\left(-1\right)^{N-i}\left(N+1\right)\left(\binom{N+1}{N-i+1}-a_{i}\binom{N+1}{N-i}+b_{i}\binom{N+1}{N-i-1}\right)=0,
\end{align*}
for $0\leq i\leq N-2.$ It is easy to see $\{\psi_{i}\left(x\right)\}_{i=0}^{N-2}$
is linearly independent. Since $\mathrm{dim}\mathbb{P}_{N}^{0}=N-1$,
this set is a basis for $\mathbb{P}_{N}^{0}$. This completes the
proof\@.
\end{proof}

Figure \ref{fig:Dual-Bern}. illustrates the DBPs and the modal basis
functions for $6\leq N\leq8$\@. It is seen that the modal basis
functions have less values than the corresponding dual functions on
the unit interval, expecting less round-off errors\@.

\begin{figure}
\centering
\includegraphics[scale=0.35]{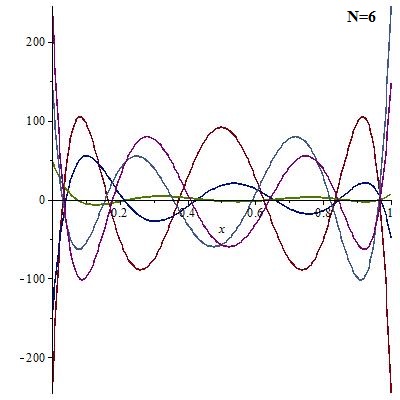}\includegraphics[scale=0.35]{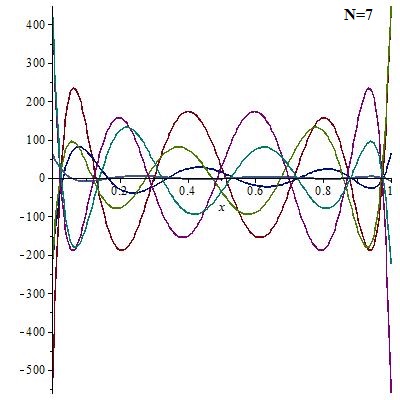}\includegraphics[scale=0.35]{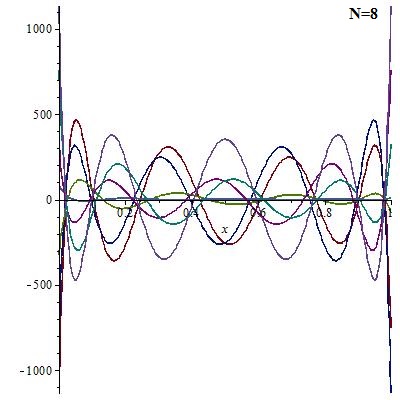}
\par
\centering
\includegraphics[scale=0.35]{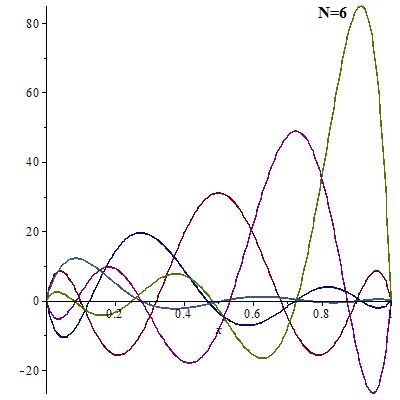}\includegraphics[scale=0.35]{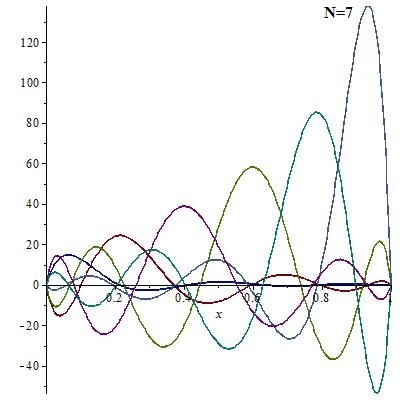}\includegraphics[scale=0.35]{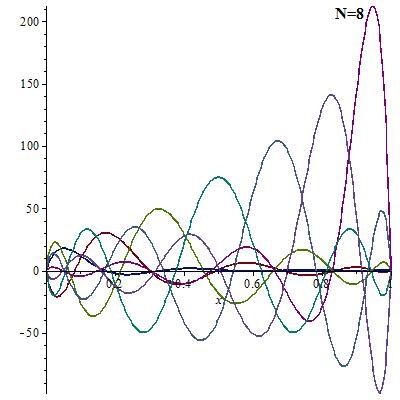}
\par
\centering{}\caption{\label{fig:Dual-Bern}Graphs of DBPs $\{\tilde{\psi}_{i}(x),\,0\leq i\leq N\}$
(top) and the modal basis functions $\{\psi_{i}(x),\,0\leq i\leq N-2\}$
(bottom).}
\end{figure}

\subsection{\label{subsec:Transformation-matrices-and}Transformation matrices
and the operational matrix for derivatives}

For $N\geq2$, consider the $(N+1)-$vector $\tilde{\mathbf{\Psi}}$
and the $(N-1)-$vector $\mathbf{\Psi}$ consisting of dual functions
given by (\ref{eq:Dual}) and the modal basis functions given by (\ref{eq:CompactBasis}),
respectively:
\begin{align}
 &   \tilde{\mathbf{\Psi}}(\cdot)=[\tilde{\psi}_{i}\left(\cdot\right):\,0\leq i\leq N]^{T},\label{eq:DualBasisVector}\\
 &   \mathbf{\Psi}(\cdot)=[\psi_{i}\left(\cdot\right):\,0\leq i\leq N-2]^{T}.\label{eq:ModalBasisVector}
\end{align}
For simplicity, we ignore the dependence of the vectors on variable.
First, note that 
\begin{equation}
 \mathbf{\Psi}=\mathbf{G}\tilde{\mathbf{\Psi}},\label{eq:SaiToSaiTilde(G)}
\end{equation}
where $\mathbf{G}=[g_{i,j}]$ is an $\left(N-1\right)\times\left(N+1\right)$
matrix with three diagonals as 
\begin{align*}
g_{i,j} & =  \begin{cases}
1, & i-j=0,\\
a_{i}, & j=i+1,\\
b_{i}, & j=i+2,
\end{cases}\quad0\leq i\leq N-1,0\leq j\leq N.
\end{align*}
To derive a formula for the derivative of the modal basis functions,
we first prove the following result.
\begin{lemma}
The operational matrix for derivative of the DBPs, $\mathbf{P}$ satisfies
\begin{equation}
\tilde{\mathbf{\Psi}}^{\prime}  =  \mathbf{P}\tilde{\mathbf{\Psi}},\label{eq:P SaiT' to SaiT}
\end{equation}
where the matrix $\mathbf{P}=[p_{i,j}:\,0\leq i,j\leq N]$ is given
by
\begin{align*}
p_{i,j}=\begin{cases}
-(-1)^{i}(N+1)\binom{N+1}{i+1}+N\delta_{i,0}+\delta_{i,1}, & j=0,\\
-p_{N-i,0} & j=N,\\
i, & j=i-1,\,j\neq0,\\
N-2i, & j=i,\,j\neq0,N\\
-N+i, & j=i+1,\,j\neq N.
\end{cases}
\end{align*}
\end{lemma}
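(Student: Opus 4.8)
The plan is to establish the relation $\tilde{\mathbf{\Psi}}' = \mathbf{P}\tilde{\mathbf{\Psi}}$ by writing each $\tilde{\psi}_i'$ as a combination of the dual Bernstein basis and then identifying the coefficients. The natural starting point is the defining expansion $\tilde{\psi}_i(x) = \sum_{j} c_{i,j}\phi_j(x)$ together with the three-term recurrence \eqref{eq:BernDeriv} for $\phi_j'$. Differentiating term by term and substituting \eqref{eq:BernDeriv} yields $\tilde{\psi}_i'(x)$ as an explicit combination of the $\phi_j$; however, to read off the entries of $\mathbf{P}$ one must re-expand this in the \emph{dual} basis. The cleanest route is to use biorthogonality \eqref{eqBiorthogonality}: for any polynomial $q\in\mathbb{P}_N$ one has $q = \sum_{k}\big(\int_0^1 \phi_k q\big)\tilde{\psi}_k$, so the coefficient $p_{i,k}$ of $\tilde{\psi}_k$ in $\tilde{\psi}_i'$ equals $\int_0^1 \phi_k(x)\,\tilde{\psi}_i'(x)\,dx$.

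First I would compute $\int_0^1 \phi_k \tilde{\psi}_i'\,dx$ by integration by parts, giving
\[
p_{i,k} = \big[\phi_k\tilde{\psi}_i\big]_0^1 - \int_0^1 \phi_k'(x)\,\tilde{\psi}_i(x)\,dx
= \phi_k(1)\tilde{\psi}_i(1) - \phi_k(0)\tilde{\psi}_i(0) - \int_0^1 \phi_k'(x)\,\tilde{\psi}_i(x)\,dx.
\]
The boundary term is handled by the endpoint interpolation \eqref{eq:BernBound}, which kills it unless $k=0$ or $k=N$, and in those cases introduces exactly the terms $-\tilde{\psi}_i(0)$ and $+\tilde{\psi}_i(1)$; evaluating these via \eqref{eq:DualValue0}--\eqref{eq:DualValue1} produces the $-(-1)^i(N+1)\binom{N+1}{i+1}$ contribution for the $j=0$ column and its mirror image $-p_{N-i,0}$ for the $j=N$ column (the mirror symmetry itself following from Lemma~\ref{Dual properties}(i)). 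For the remaining integral I would substitute the recurrence \eqref{eq:BernDeriv} for $\phi_k'$ and then apply biorthogonality \eqref{eqBiorthogonality} to each of the three resulting terms $\int_0^1 \phi_{k-1}\tilde{\psi}_i$, $\int_0^1 \phi_k\tilde{\psi}_i$, $\int_0^1 \phi_{k+1}\tilde{\psi}_i$, each of which is a Kronecker delta. This immediately yields the tridiagonal entries: the subdiagonal value $i$ from $j=i-1$, the diagonal value $N-2i$ from $j=i$, and the superdiagonal value $-N+i$ from $j=i+1$, together with the extra $+N\delta_{i,0}$ and $+\delta_{i,1}$ corrections that appear in the $j=0$ column because the interior formula would otherwise double-count contributions already absorbed into the boundary term.

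The one subtlety — and the step I expect to require the most care — is the bookkeeping at the corners $j=0$ and $j=N$, where the boundary term from integration by parts overlaps with the would-be tridiagonal entries, and one must check that the stated piecewise formula (with its $j\neq 0$ and $j\neq N$ side conditions and the $+N\delta_{i,0}+\delta_{i,1}$ patch) correctly combines both contributions rather than double-counting; I would verify this by treating the cases $i=0$, $i=1$, $i=N-1$, $i=N$, and generic interior $i$ separately. A useful consistency check along the way is that each row of $\mathbf{P}$ must sum to $0$ (since $\tilde{\mathbf{\Psi}}' $ must annihilate the constant polynomial, equivalently differentiating the identity $\sum_i \tilde{\psi}_i \equiv N+1$ of Lemma~\ref{Dual properties}(ii) term by term is consistent, and more directly $\int_0^1 \big(\sum_k \phi_k\big)\tilde{\psi}_i'\,dx = \int_0^1 \tilde{\psi}_i'\,dx = \tilde{\psi}_i(1)-\tilde{\psi}_i(0)$), which pins down the relation between the corner entries and the interior band and confirms the signs.
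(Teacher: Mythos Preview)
Your proposal is correct and follows essentially the same route as the paper: compute $p_{i,j}=\int_0^1\phi_j\,\tilde{\psi}_i'\,dx$ via biorthogonality, integrate by parts, insert the three-term recurrence \eqref{eq:BernDeriv} for $\phi_j'$, and evaluate the boundary contributions using \eqref{eq:DualValue0}--\eqref{eq:DualValue1}. One small slip in your closing aside: differentiating $\sum_i\tilde{\psi}_i\equiv N+1$ forces the \emph{column} sums of $\mathbf{P}$ to vanish, whereas the row sums equal $\tilde{\psi}_i(1)-\tilde{\psi}_i(0)$ as your own integral computation shows, so the ``row sums are zero'' claim should be dropped (this does not affect the proof itself).
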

\begin{proof}
The DBPs $\tilde{\mathbf{\Psi}}$ is a basis for $P_{N}$, so we expand
$\tilde{\psi}_{i}^{\prime}\left(x\right)$ for $0\leq i\leq N,$ as
\begin{equation*}
\tilde{\psi}_{i}^{\prime}\left(x\right)  =  \sum_{j=0}^{N}{p_{i,j}\tilde{\psi}_{j}\left(x\right)}.
\end{equation*}
Integration by parts and (\ref{eq:BernDeriv}) imply that 
\begin{align*}
p_{i,j} & =  \int_{0}^{1}{\tilde{\psi}_{i}^{\prime}\left(x\right)\phi_{j}\left(x\right)dx}\\
 & =  \tilde{\psi}_{i}\left(1\right)\delta_{j,N}-\tilde{\psi}_{i}\left(0\right)\delta_{j,0}-\int_{0}^{1}{\tilde{\psi}_{i}\left(x\right)\left((N-j+1)\phi_{j-1}(x)-(N-2j)\phi_{j}(x)-(j+1)\phi_{j+1}(x)\right)dx}.
\end{align*}
The biorthogonality (\ref{eqBiorthogonality}) gives
\begin{equation*}
p_{i,j}  =  \tilde{\psi}_{i}\left(1\right)\delta_{j,N}-\tilde{\psi}_{i}\left(0\right)\delta_{j,0}-\left((N-j+1)\delta_{i,j-1}-(N-2j)\delta_{i,j}-(j+1)\delta_{i,j+1}\right).
\end{equation*}
Now, the result is proved by considering (\ref{eq:DualValue0}) and
(\ref{eq:DualValue1})\@.
\end{proof}

\begin{remark}
The matrix $\mathbf{P}$ is a sparse matrix of order $N+1$ with $p_{i,j}=0$
for $|i-j|>1,$ $j\neq0,N$; for instance, see the matrix given below.
\end{remark}
\begin{corollary}
Set $\alpha_{i,0}=-(-1)^{i}(N+1)\binom{N+1}{i+1}+N\delta_{i,0}+\delta_{i,1}$
for $0\leq i\leq N.$ Then, from (\ref{eq:P SaiT' to SaiT}), we infer
the following five-term recurrence relation is deduced
\begin{align*}
\tilde{\psi}_{i}^{\prime}(x)  &=  \alpha_{i,0}\tilde{\psi}_{0}\left(x\right)+(1-\delta_{i,1})i\tilde{\psi}_{i-1}\left(x\right)+(1-\delta_{i,0})(1-\delta_{i,N})\left(N-2i\right)\tilde{\psi}_{i}\left(x\right)\\
 & \hfill-(1-\delta_{i,N-1})\left(N-i\right)\tilde{\psi}_{i+1}\left(x\right)-\alpha_{N-i,0}\tilde{\psi}_{N}\left(x\right),
\end{align*}
where we set $\tilde{\psi}_{i}\equiv0$ for $i<0$ and $i>N.$
\end{corollary}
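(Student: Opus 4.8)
The plan is to simply unpack the matrix identity $\tilde{\mathbf{\Psi}}' = \mathbf{P}\tilde{\mathbf{\Psi}}$ componentwise, reading off row $i$ of $\mathbf{P}$ from the case definition in the preceding lemma. Writing out the $i$-th row of the product gives $\tilde\psi_i'(x) = \sum_{j=0}^N p_{i,j}\tilde\psi_j(x)$, and the claim is that for a generic interior index $i$ this sum has only five surviving terms: the $j=0$ term with coefficient $p_{i,0}=\alpha_{i,0}$, the $j=i-1$ term with coefficient $i$, the $j=i$ term with coefficient $N-2i$, the $j=i+1$ term with coefficient $-(N-i)$, and the $j=N$ term with coefficient $p_{i,N} = -p_{N-i,0} = -\alpha_{N-i,0}$.

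The main bookkeeping issue is that the lemma's description of $\mathbf{P}$ carries side conditions ($j\neq 0$, $j\neq N$, $j\neq 0,N$) which exist to prevent double-counting when the tridiagonal band collides with the full first and last columns. So I would handle the degenerate indices $i\in\{0,1,N-1,N\}$ by noting exactly which of the five nominal terms coincide or must be suppressed, and this is precisely what the Kronecker-delta factors $(1-\delta_{i,1})$, $(1-\delta_{i,0})(1-\delta_{i,N})$, and $(1-\delta_{i,N-1})$ encode. For instance, when $i=1$ the subdiagonal entry $p_{1,0}$ is already absorbed into $\alpha_{1,0}$ (via the $+\delta_{i,1}$ correction term in the definition of $p_{i,0}$), so the separate "$j=i-1$" term must be switched off — hence the factor $(1-\delta_{i,1})$ multiplying $i\,\tilde\psi_{i-1}$. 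A similar check at $i=N-1$ explains the $(1-\delta_{i,N-1})$ factor on the superdiagonal term, and at $i=0,N$ the diagonal coefficient $N-2i$ is not used because $p_{0,0}$ and $p_{N,N}$ belong to the first/last-column cases, explaining $(1-\delta_{i,0})(1-\delta_{i,N})$ on $\tilde\psi_i$.

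Concretely, the steps are: (1) state $\tilde\psi_i'(x)=\sum_j p_{i,j}\tilde\psi_j(x)$ from \eqref{eq:P SaiT' to SaiT}; (2) substitute $p_{i,0}=\alpha_{i,0}$ and $p_{i,N}=-p_{N-i,0}=-\alpha_{N-i,0}$; (3) for $2\le i\le N-2$, note the only other nonzero $p_{i,j}$ are at $j=i-1,i,i+1$ with the stated values, giving the five-term formula with all delta-factors equal to $1$; (4) verify the four boundary values of $i$ separately and observe the delta-factors reproduce exactly the right suppressions. I expect the small-$i$/large-$i$ edge cases — keeping straight which term has been folded into $\alpha_{i,0}$ — to be the only real obstacle, and it is purely a careful enumeration, not a computation.

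\begin{proof}
Since $\tilde{\mathbf{\Psi}}$ is a basis for $\mathbb{P}_N$, identity \eqref{eq:P SaiT' to SaiT} read in the $i$-th component states
\begin{equation*}
\tilde{\psi}_i^{\prime}(x) = \sum_{j=0}^{N} p_{i,j}\,\tilde{\psi}_j(x),\qquad 0\le i\le N.
\end{equation*}
By the definition of $\mathbf{P}$ in the preceding lemma, $p_{i,0} = \alpha_{i,0}$ and $p_{i,N} = -p_{N-i,0} = -\alpha_{N-i,0}$, while for $0<j<N$ one has $p_{i,j}=0$ unless $|i-j|\le 1$, in which case $p_{i,i-1}=i$ (with $j\neq 0$), $p_{i,i}=N-2i$ (with $j\neq 0,N$), and $p_{i,i+1}=-(N-i)$ (with $j\neq N$). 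Hence for $2\le i\le N-2$ the sum collapses to the five indices $j\in\{0,i-1,i,i+1,N\}$ and yields the stated formula, all Kronecker factors being equal to $1$. It remains to check the four remaining values of $i$. For $i=0$ the interior-column entry $p_{0,0}$ does not occur (the $j=0$ case governs it and already contributes $\alpha_{0,0}$), there is no $j=i-1$ term, and $p_{0,1}=-(N-0)=-N$ survives; the factors $(1-\delta_{0,1})=1$, $(1-\delta_{0,0})=0$, $(1-\delta_{0,N-1})=1$ reproduce exactly this. For $i=1$, the subdiagonal index $j=0$ is merged into $\alpha_{1,0}$ through the $+\delta_{i,1}$ correction in the definition of $p_{i,0}$, so the separate $j=i-1$ term must be dropped; accordingly $(1-\delta_{1,1})=0$ kills $i\,\tilde\psi_{i-1}$, while $(1-\delta_{1,0})(1-\delta_{1,N})=1$ and $(1-\delta_{1,N-1})=1$ keep the diagonal and superdiagonal terms. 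The cases $i=N-1$ and $i=N$ follow by the same argument applied to the last column, using the symmetry $p_{i,N}=-p_{N-i,0}$: the factor $(1-\delta_{i,N-1})$ removes the superdiagonal term when it has been absorbed into $-\alpha_{N-i,0}$, and $(1-\delta_{i,N})$ removes the diagonal term at $i=N$. In every case the displayed five-term relation holds, with $\tilde\psi_i\equiv 0$ for $i<0$ or $i>N$ as stipulated. This completes the proof.
\end{proof}
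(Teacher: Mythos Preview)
Your proof is correct and is exactly the intended argument: the paper states this result as an immediate corollary of the lemma without supplying any proof, so your componentwise unpacking of $\tilde{\mathbf{\Psi}}'=\mathbf{P}\tilde{\mathbf{\Psi}}$ together with the boundary-index bookkeeping is precisely what the authors have in mind and simply leave to the reader.
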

We derive the transformation matrices that map the Bernstein and Chebyshev
coefficients

Now we derive the transformation matrix that maps the derivative of
modal basis functions to DBPs. This facilitates the use of Galerkin
method in the next section. In the following, $(p,q)-band$ matrix
stands for a matrix with $p$ and $q$ nonzero diagonals below and
above the main diagonal, respectively. 
\begin{lemma}
\label{Lemma1forQ}Let the vectors $\mathbf{\Psi}$ and $\tilde{\mathbf{\Psi}}$
be defined as in (\ref{eq:DualBasisVector}) and (\ref{eq:ModalBasisVector}),
respectively. Then, 
\begin{equation*}
  \mathbf{\Psi}^\prime=\mathbf{Q}\tilde{\mathbf{\Psi}},
\end{equation*}
where $\mathbf{Q}$ is an $\left(N-1\right)\times\left(N+1\right),$
$\mathrm{(1,3)-band}$ matrix given by $\mathbf{Q}=\mathbf{GP}.$
\end{lemma}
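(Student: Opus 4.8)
The plan is to chain together the two relations already established in the excerpt. By Proposition~\ref{Prop 1. vanishPsi} and equation~(\ref{eq:SaiToSaiTilde(G)}), the modal basis vector satisfies $\mathbf{\Psi}=\mathbf{G}\tilde{\mathbf{\Psi}}$, where $\mathbf{G}$ is the explicit $(N-1)\times(N+1)$ three-diagonal matrix with entries $1$, $a_i$, $b_i$. Since $\mathbf{G}$ is a constant matrix (its entries do not depend on $x$), differentiating this identity termwise with respect to $x$ gives $\mathbf{\Psi}^{\prime}=\mathbf{G}\tilde{\mathbf{\Psi}}^{\prime}$. Then I would invoke the operational matrix relation~(\ref{eq:P SaiT' to SaiT}), namely $\tilde{\mathbf{\Psi}}^{\prime}=\mathbf{P}\tilde{\mathbf{\Psi}}$, to substitute and obtain $\mathbf{\Psi}^{\prime}=\mathbf{G}\mathbf{P}\tilde{\mathbf{\Psi}}$. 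Setting $\mathbf{Q}=\mathbf{G}\mathbf{P}$ yields the claimed identity $\mathbf{\Psi}^{\prime}=\mathbf{Q}\tilde{\mathbf{\Psi}}$.

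It remains to check the stated dimensions and bandwidth of $\mathbf{Q}$. The size is immediate: $\mathbf{G}$ is $(N-1)\times(N+1)$ and $\mathbf{P}$ is $(N+1)\times(N+1)$, so the product $\mathbf{Q}=\mathbf{G}\mathbf{P}$ is $(N-1)\times(N+1)$. For the band structure, I would reason as follows. The matrix $\mathbf{G}$ is a $(0,2)$-band matrix (nonzero only on the main diagonal and the two superdiagonals, in the sense that $g_{i,j}\neq0$ only for $j\in\{i,i+1,i+2\}$), while $\mathbf{P}$, by the preceding Lemma and Remark, is tridiagonal apart from its full first and last columns, i.e. $p_{i,j}=0$ whenever $|i-j|>1$ and $j\neq 0,N$. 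Multiplying a $(0,2)$-band matrix by this near-tridiagonal $\mathbf{P}$ shifts the occupied diagonals: the row index $i$ of $\mathbf{Q}$ picks up contributions from rows $i,i+1,i+2$ of $\mathbf{P}$, each of which is supported on columns within distance one of its own index (plus columns $0$ and $N$), so the $(i,j)$ entry of $\mathbf{Q}$ can be nonzero for $j$ ranging from $i-1$ up to $i+3$ — that is, one diagonal below and three above the main diagonal — together with possibly the first and last columns inherited from $\mathbf{P}$. Hence $\mathbf{Q}$ is a $(1,3)$-band matrix (modulo the two boundary columns), as asserted.

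The only genuine subtlety — and the step I would be most careful about — is the bookkeeping of the boundary columns $j=0$ and $j=N$ of $\mathbf{P}$ when forming $\mathbf{G}\mathbf{P}$, since those columns are dense rather than banded. One should verify that the first and last columns of $\mathbf{Q}$ are correctly accounted for and that no spurious fill-in appears outside the claimed $(1,3)$ band in the interior columns; this follows because each row of $\mathbf{G}$ has only three consecutive nonzero entries, so it mixes only three consecutive rows of $\mathbf{P}$, and the interior support of three consecutive near-tridiagonal rows still lies within a band of width $1+3$. Everything else is a direct substitution, so I would keep the exposition short: state $\mathbf{\Psi}^{\prime}=\mathbf{G}\tilde{\mathbf{\Psi}}^{\prime}=\mathbf{G}\mathbf{P}\tilde{\mathbf{\Psi}}$, define $\mathbf{Q}=\mathbf{G}\mathbf{P}$, and close with the dimension and band-count remark.
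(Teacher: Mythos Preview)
Your derivation of $\mathbf{Q}=\mathbf{G}\mathbf{P}$ is correct and matches the paper. The gap is in the band argument. You correctly note that $\mathbf{P}$ is tridiagonal only away from its first and last columns, and you flag this as the ``genuine subtlety.'' But you do not resolve it: your concluding sentence (``this follows because each row of $\mathbf{G}$ has only three consecutive nonzero entries\dots'') only controls the \emph{interior} columns of $\mathbf{Q}$. It says nothing about columns $0$ and $N$. Since columns $0$ and $N$ of $\mathbf{P}$ are full, a generic $(0,2)$-band matrix times $\mathbf{P}$ would have full first and last columns as well, and $\mathbf{Q}$ would \emph{not} be $(1,3)$-band. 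Your argument as written leaves the statement with the qualifier ``modulo the two boundary columns,'' which is precisely what the lemma asserts does not happen.

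The paper closes this gap by computing those boundary columns explicitly and tying them back to Proposition~\ref{Prop 1. vanishPsi}. For $i>1$,
\[
q_{i,0}=p_{i,0}+a_i p_{i+1,0}+b_i p_{i+2,0}
=-\bigl(\tilde{\psi}_i(0)+a_i\tilde{\psi}_{i+1}(0)+b_i\tilde{\psi}_{i+2}(0)\bigr)
=-\psi_i(0)=0,
\]
using that $p_{j,0}=-\tilde{\psi}_j(0)$ for $j\geq 2$ (from the formula for $\mathbf{P}$) and that the modal basis vanishes at $x=0$. A symmetric computation via Lemma~\ref{Dual properties}(i) gives $q_{i,N}=\psi_i(1)=0$ for $i<N-2$. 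This is the missing idea: the very construction of the modal basis (chosen so that $\psi_i(0)=\psi_i(1)=0$) is what kills the dense boundary columns of $\mathbf{P}$ after multiplication by $\mathbf{G}$. You should add this computation rather than leave the boundary columns as an unaddressed caveat.
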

\begin{proof}
Combining (\ref{eq:SaiToSaiTilde(G)}) with (\ref{eq:P SaiT' to SaiT}),
implies $\mathbf{Q}=\mathbf{GP}.$ To prove that $\mathbf{Q}$ is
a $\mathrm{(1,3)-band}$ matrix, it is sufficient to show that $q_{i,0}=0$
for $i>1$ and $q_{i,N}=0$ for $i<N-2,$ 
\begin{align*}
q_{i,0} & =  \left(\mathbf{GP}\right)_{i,0}=p_{i,0}+a_{i}p_{i+1,0}+b_{i}p_{i+2,0}\\
 & =  -\left(\tilde{\psi}_{i}\left(0\right)+a_{i}\tilde{\psi}_{i+1}\left(0\right)+b_{i}\tilde{\psi}_{i+2}\left(0\right)\right)=-\psi_{i}(0)=0,
\end{align*}
and for $i<N-2,$ by (\ref{Dual properties})
\begin{align*}
q_{i,N} & =  p_{i,N}+a_{i}p_{i+1,N}+b_{i}p_{i+2,N}=\tilde{\psi}_{N-i}\left(0\right)+a_{i}\tilde{\psi}_{N-i-1}\left(0\right)+b_{i}\tilde{\psi}_{N-i-2}\left(0\right)\\
 & =  \tilde{\psi}_{i}\left(1\right)+a_{i}\tilde{\psi}_{i+1}\left(1\right)+b_{i}\tilde{\psi}_{i+2}\left(1\right)=\psi_{i}(1)=0.
\end{align*}
Note that $\psi_{i}$'s vanish at the boundary values according to
Proposition \ref{Prop 1. vanishPsi}. The proof is complete.
\end{proof}

To see the sparsity of the transformation matrices, $\mathbf{P}$,
$\mathbf{G}$ and $\mathbf{Q}$ for $N=6$ are shown in the following.

\begin{table}[H]
\noindent\resizebox{\textwidth}{!}{%

\begin{tabular}{ccc}
$\mathbf{G}=\left[\begin{array}{ccccccc}
1 & \frac{4}{7} & \frac{1}{7} & 0 & 0 & 0 & 0\\
0 & 1 & 1 & \frac{2}{5} & 0 & 0 & 0\\
0 & 0 & 1 & \frac{8}{5} & 1 & 0 & 0\\
0 & 0 & 0 & 1 & \frac{5}{2} & \frac{5}{2} & 0\\
0 & 0 & 0 & 0 & 1 & 4 & 7
\end{array}\right],$ & $\mathbf{P}=\left[\begin{array}{ccccccc}
-43 & -6 & 0 & 0 & 0 & 0 & 7\\
148 & 4 & -5 & 0 & 0 & 0 & -49\\
-245 & 2 & 2 & -4 & 0 & 0 & 147\\
245 & 0 & 3 & 0 & -3 & 0 & -245\\
-147 & 0 & 0 & 4 & -2 & -2 & 245\\
49 & 0 & 0 & 0 & 5 & -4 & -148\\
-7 & 0 & 0 & 0 & 0 & 6 & 43
\end{array}\right],$ & $\mathbf{Q}=\left[\begin{array}{ccccccc}
\frac{46}{7} & -\frac{24}{7} & -\frac{18}{7} & -\frac{4}{7} & 0 & 0 & 0\\
1 & 6 & -\frac{9}{5} & -4 & -\frac{6}{5} & 0 & 0\\
0 & 2 & \frac{34}{5} & 0 & -\frac{34}{5} & -2 & 0\\
0 & 0 & 3 & 10 & \frac{9}{2} & -15 & -\frac{5}{2}\\
0 & 0 & 0 & 4 & 18 & 24 & -46
\end{array}\right].$\tabularnewline
\end{tabular}

}
\end{table}

\section{\label{sec:variationalFormulation}Variational formulation of the
problem (\ref{eq:main}) and the spectral discretization}

In this section, at first the problem (\ref{eq:main})-(\ref{BVs})
is discretized in time. Then we develop a matrix approach Bernstein
dual-Petrov-Galerkin method using the results of the previous section. 

\subsection{Time discretization}

Consider the subdiffusion equation (\ref{eq:main}) at $t=t_{k+1},\,k\geq0$
as
\begin{equation}
{D_{t}^{\alpha}}u\left(x,y,t_{k+1}\right)=\kappa\Delta u\left(x,y,t_{k+1}\right)+S\left(x,y,t_{k+1}\right).\label{eq:AtTimek+1}
\end{equation}
Let $u^{k}$ be an approximation of $u$ at $t=t_{k}=k\tau$ for $k=0,1,..,M,$
where $\tau=\frac{T}{M}$ is the time step length\@. The time fractional
derivative can be approximated by definition (\ref{eq:fracDef}) and
using forward difference for the derivative inside as
\begin{equation}
{D_{t}^{\alpha}}u\left(x,y,t_{k+1}\right)=\mu(u^{k+1}-(1-b_{1})u^{k}-\sum_{j=1}^{k-1}(b_{j}-b_{j+1})u^{k-j}-b_{k}u^{0})+r_{\tau}^{k+1},\quad k\geq1,\label{eq:4.1-1}
\end{equation}
where $\mu=\frac{1}{\tau^{\alpha}\Gamma\left(2-\alpha\right)}$ and
$b_{j}=\left(j+1\right)^{1-\alpha}-j^{1-\alpha}$ for $k\geq0$ and
$0\leq j\leq k$\@. The error is bounded by 
\begin{equation}
\left|r_{\tau}^{k+1}\right|  \leq  \tilde{c}_{u}\tau^{2-\alpha},\label{eq:rateInTime}
\end{equation}
where the coefficient $\tilde{c}_{u}$ depends only on $u$ \citep{Deng}\@.
The time discretization (\ref{eq:4.1-1}) is referred to as L1 approximation
(see e.g. \citep{Deng,Rame})\@. Substituting from (\ref{eq:4.1-1})
into (\ref{eq:AtTimek+1}) and multiplying both sides by $\tau^{\alpha}\Gamma\left(2-\alpha\right)$
and dropping $(x,y)$, the following time-discrete scheme is obtained
\begin{align}
 &   u^{k+1}-\alpha_{0}\Delta u^{k+1}=f^{k+1},\quad k\geq0,\nonumber \\
 &   f^{k+1}:=(1-b_{1})u^{k}+\sum_{j=1}^{k-1}(b_{j}-b_{j+1})u^{k-j}+b_{k}u^{0}+\frac{1}{\mu}S^{k+1},\label{eq: semi-discrete}
\end{align}
with $\alpha_{0}=\frac{k}{\mu}$ and $u^{0}=g$ is given by the initial
condition (\ref{IV}) with the error 
\begin{equation}
  r^{k+1}\leq\tau^{\alpha}\Gamma\left(2-\alpha\right)|r_{\tau}^{k+1}|\leq\tilde{c}_{u}\tau^{2}.\label{eq:TruncationErrorMulti}
\end{equation}

For $k=0,$ it reads as
\begin{equation}
  u^{1}-\alpha_{0}\kappa\Delta u^{1}=(1-b_{1})u^{1}+b_{1}u^{0}+\frac{1}{\mu}S^{1}.\label{eq:semi-discFork=00003D0}
\end{equation}
The boundary conditions for the semidiscrete problem is $u^{k+1}=0$
on $\partial\Omega$. 

\subsection{Weak and spectral formulation}

Consider the problem (\ref{eq: semi-discrete}) with $\Omega=I{}^{2},\,I=\left(0,1\right)$
and the homogeneous Dirichlet boundary conditions $u^{k+1}|_{\partial\Omega}=0$\@.
We seek an approximate solution in the Sobolev space $H_{0}^{1}\left(\Omega\right)=\{u\in H^{1}(\Omega),u=0,\,\text{on \ensuremath{\partial\Omega}\}}$\@.
A weak formulation of the problem (\ref{eq: semi-discrete}) is to
find $u^{k+1}\in H_{0}^{1}(\Omega)$ such that $\forall v\in H_{0}^{1}(\Omega)$:
\begin{equation}
(u^{k+1},v)+\alpha_{0}(\nabla u^{k+1},\nabla v)=((1-b_{1})u^{k}+\sum_{j=1}^{k-1}(b_{j}-b_{j+1})u^{k-j}+b_{k}u^{0}+\frac{1}{\mu}S^{k+1},v).\label{eq:weakform}
\end{equation}
Let $\mathbb{P}_{N}$ be the space of polynomials over $I$ with degree
not exceeding $N$ and $(\mathbb{P}_{N}^{0})^{2}=\{v\in(\mathbb{P}_{N})^{2}:\,v=0,\,\text{on}\,\partial\Omega\}$\@. 

The Galerkin formulation of the (\ref{eq:weakform}) is to find $u_{N}^{k+1}\in(\mathbb{P}_{N}^{0})^{2}$
such that $\forall v_{N}\in(\mathbb{P}_{N}^{0})^{2}$:
\begin{equation}
(u_{N}^{k+1},v_{N})+\alpha_{0}(\nabla u_{N}^{k+1},\nabla v_{N})=((1-b_{1})u_{N}^{k}+\sum_{j=1}^{k-1}(b_{j}-b_{j+1})u_{N}^{k-j}+b_{k}u_{N}^{0}+\frac{1}{\mu}I_{N}S^{k+1},v_{N}),\label{eq:Galerkin}
\end{equation}
with $\left(f,g\right)$ being the standard inner product and $I_{N}$
an interpolation operator\@. 

\subsubsection{Bernstein dual-Petrov-Galerkin method}

Since $\dim\mathbb{P}_{N}^{0}=N-1$, and due to (\ref{eq:BernBound}),
we choose a basis for it by removing the first and last Bernstein
polynomials of degree $N$, i.e.,
\begin{equation}
\mathbf{\Phi}  =  \left[\phi_{i}(x):\,1\leq i\leq N-1\right]^{T}.\label{eq:BernbasisVector}
\end{equation}
 Using (\ref{eq:BernDeriv}), it is easy to verify
\begin{equation}
  \mathbf{\Phi}'=\mathbf{D\Phi}+\mathbf{d},\label{eq:BernDerMatrix}
\end{equation}
where $\mathbf{D}=\mathrm{tridiag}(N-i+1,2i-N,-(i+1))$ is a tridiagonal
matrix of order  $N-1$ and $\mathbf{d}=N\left[\phi_{0},0,\dots,0,-\phi_{N}\right]^{T}$
is an $(N-1)-$vector.

Assuming $N_{x}=N_{y}=N,$ we use the tensor product of the basis
functions of $\mathbb{P}_{0}^{N}$ as a basis for two dimensional
case, $\left\{ \phi_{i}(x)\phi_{j}(y):1\leq i,j\leq N-1\right\} $
and consider an approximate solution of (\ref{eq: semi-discrete})
as 
\begin{equation}
 u_{N}^{k+1}=\sum_{i,j=1}^{N-1}{\hat{u}_{i,j}^{k+1}\phi_{i}(x)\phi_{j}(y)}=\mathbf{\Phi}^{T}\left(x\right)\mathbf{U}^{k+1}\mathbf{\Phi}\left(y\right),\quad(x,y)\in\Omega,\label{eq:ApproxSol}
\end{equation}
where $\mathbf{\Phi}\left(\cdot\right)=\left[\phi_{i}(\cdot):\,1\leq i\leq N-1\right]^{T}$
and $\mathbf{U}^{k+1}=[\hat{u}_{i,j}^{k+1}]$. Let us use the following
notations. 
\begin{align}
 & a_{i,j}=\int_{I}{\phi_{j}^{\prime}(x)\psi_{i}^{\prime}(x)dx},\quad\mathbf{A}=[a_{i,j}],\nonumber \\
 & b_{i,j}=\int_{I}{\phi_{j}(x)\psi_{i}(x)dx},\quad\mathbf{B}=[b_{i,j}],\nonumber \\
 & f_{i,j}^{k+1}=\int_{\Omega}{I_{N}f^{k+1}(x,y)\psi_{j}(x)\psi_{i}(y)d\Omega},\label{eq:integralsOFf}
\end{align}
for $1\leq j\leq N-1,\,0\leq i\leq N-2.$ 

Taking the test functions of (\ref{eq:Galerkin}) as $v=\psi_{l}(x)\psi_{m}(y)$
for $l,m=0,1,\dots,N-2,$ it is seen that the spectral form (\ref{eq:Galerkin})
is equivalent to the following linear system:
\begin{equation*}
 \mu_{\tau}^{\alpha}\mathbf{B}\mathbf{U}^{k+1}\mathbf{B}^{T}+\kappa\left(\mathbf{A}\mathbf{U}^{k+1}\mathbf{B}^{T}+\mathbf{B}\mathbf{U}^{k+1}\mathbf{A}^{T}\right)=\mathbf{F}^{k+1},\quad k\geq0,
\end{equation*}
that can be equivalently written as a Sylvester equation but it requires
computing the inverse of \textbf{$\mathbf{B}$}\@. Although $\mathbf{B}$
has only three nonzero diagonals, it can be shown that its inverse
is a dense matrix and so we avoid transforming to Sylvester equation.
Instead we use the equivalent tensor product form 
\begin{equation}
 \left(\mu_{\tau}^{\alpha}\mathbf{B}\otimes\mathbf{B}+\kappa\left(\mathbf{B}\otimes\mathbf{A}+\mathbf{A}\otimes\mathbf{B}\right)\right)\mathbf{u}^{k+1}=\mathbf{f}^{k+1},\label{eq:TensorProdSys}
\end{equation}
with $\mathbf{f=}\left[f_{1,0,\cdots},f_{q,0};f_{1,1\cdots},f_{q,1};\dots;f_{1,q-1,\cdots},f_{q,q-1}\right]^{T},\,q=N-1$\@.
It is worth to note that the coefficient matrix of the linear system
(\ref{eq:TensorProdSys}) is the same for all time steps so it is
to be evaluated just once for all $k\geq0.$

In terms of the trial vector (\ref{eq:BernbasisVector}), and test
vector (\ref{eq:ModalBasisVector}), we may write 
\begin{equation*}
  \mathbf{A=}\int_{I}{\mathbf{\Psi^{\prime}}\mathbf{\Phi}^{\prime T}dx},\mathbf{\quad B}=\int_{I}{\mathbf{\Psi}\mathbf{\Phi}^{T}}.
\end{equation*}
To facilitate the computations, in what follows, these matrices are
related to the transformation matrices introduced in Section \ref{subsec:Transformation-matrices-and}.
First, note that by the biorthogonality (\ref{eqBiorthogonality}),
we have
\begin{equation}
\int_{I}{\mathbf{\tilde{\Psi}}\mathbf{\Phi}^{T}dx}  =  \left[\begin{array}{ccccc}
0 & 0 & \cdots & 0 & 0\\
1 & 0 & \cdots & 0 & 0\\
0 & 1 & \cdots & 0 & 0\\
\vdots & \vdots & \ddots & \vdots & \vdots\\
0 & 0 & \cdots & 1 & 0\\
0 & 0 & \cdots & 0 & 1\\
0 & 0 & \cdots & 0 & 0
\end{array}\right]=:\mathbf{\tilde{I}}.\label{eq:Itilde}
\end{equation}
Now from (\ref{eq:SaiToSaiTilde(G)}), and writing $\mathbf{G}$ as
$\mathbf{G}=[\mathbf{g}_{0},\mathbf{g}_{1},\dots,\mathbf{g}_{N}]$,
we get 
\begin{equation*}
\mathbf{B}  =  \int_{I}{\mathbf{G}\mathbf{\tilde{\Psi}}\mathbf{\Phi}^{T}dx}=\mathbf{G}\mathbf{\tilde{I}}=[\mathbf{g}_{1},\mathbf{g}_{2},\dots,\mathbf{g}_{N-1}].
\end{equation*}
So $\mathbf{B}$ is a tridiagonal matrix whose entries are given by
\begin{align}
b_{i,j}  =  \begin{cases}
1, & j=i+1,\\
a_{i}, & j=i,\\
b_{i}, & j=i-1,\\
0, & otherwise,
\end{cases}\label{eq:Bentries}
\end{align}
where $a_{i}$'s and $b_{i}$'s are easily computed by (\ref{eq:a=00005Bi=00005D and b=00005Bi=00005D}).
On the other hand, from Lemma \ref{Lemma1forQ}, the Bernstein operational
matrix of differentiation (\ref{eq:BernDerMatrix}) and (\ref{eq:Itilde}),
we obtain
\begin{align}
\mathbf{A} & =  \int_{I}{\mathbf{Q}\tilde{\mathbf{\Psi}}(\mathbf{\Phi^{T}D^{T}}+\mathbf{d^{T}})dx}\nonumber \\
 & =  \mathbf{Q}\mathbf{\tilde{I}}\mathbf{D^{T}}+N[\mathbf{q}_{0},\mathbf{0},\dots,\mathbf{0},\mathbf{q}_{N}]\nonumber \\
 & =  [\mathbf{q}_{1},\dots,\mathbf{q}_{N-1}]\mathbf{D}^{T}+N[\mathbf{q}_{0},\mathbf{0},\dots,\mathbf{0},\mathbf{q}_{N}],\label{eq:Amatrix}
\end{align}
 where $\mathbf{Q}=[\mathbf{q}_{0},\mathbf{q}_{1},\dots,\mathbf{q}_{N}]$
is a $\mathrm{(1,3)-band}$ matrix introduced Lemma \ref{Lemma1forQ}.
Hence, $\mathbf{Q}\mathbf{\tilde{I}}$ is a pentadiagonal matrix and
$\mathbf{A}$ is the product of a pentadiagonal and a tridiagonal
matrix plus a sparse matrix. From Lemma \ref{Lemma1forQ} and (\ref{eq:Amatrix}),
it is seen that $\mathbf{A}$ is a seven-diagonal matrix. 

Notice that the solution of linear system (\ref{eq:TensorProdSys})
requires the matrices $\mathbf{A}$ and $\mathbf{B}.$ $\mathbf{A}$
is obtained by a sparse matrix-matrix multiplication (\ref{eq:Amatrix})
and entries of $\mathbf{B}$ are given by (\ref{eq:Bentries}). 
\begin{remark}
Since the coefficient matrix of the linear system (\ref{eq:TensorProdSys})
remains intact for a fixed $\tau$, only the RHS vector to be computed
for different time steps, $k=0,1,\dots$ up to a desired time. So
it is efficient to use a band-LU factorization for solving the system.
It is remarkable that for a $(2p+1)-$band matrix, the LU-factorization
can be done with $O(Np^{2})$ flops and backward substitutions require
$O(Np)$ flops {[}\citealp{Golub}, Section 4.3{]}.
\end{remark}

\section{\label{sec:Error-estimation}Stability and convergence analysis}

For the error analysis, we assume the problem (\ref{eq:main}) to
be homogeneous, $S=0$. 

For $\alpha\geq0,$ the bilinear form $a\left(u,v\right)=\left(\nabla u,\nabla v\right)+\alpha(u,v)$
in (\ref{eq:Galerkin}) is continuous and coercive in $H_{0}^{1}\left(\Omega\right)\times H_{0}^{1}\left(\Omega\right)$.
The existence and uniqueness of the solution for both the weak form
(\ref{eq:Galerkin}) and the Galerkin form (\ref{eq:Galerkin}) is
guarantied by the well-known Lax-Milgram lemma.

We define the following inner product and the associated energy norm
on $H_{0}^{1}(\Omega)$: 
\begin{equation}
  (u,v)=\int_{\Omega}uvd\Omega,\quad(u,v)_{1,}=(u,v)+\alpha_{0}(\nabla u,\nabla v),\quad\|u\|_{1}=(u,u)_{1}^{\frac{1}{2}}.\label{eq:EnergyNorm}
\end{equation}

\begin{theorem}
\label{Th:stabilityForThe-weak-semidiscrete}The weak form (\ref{eq:weakform})
is unconditionally stable: 
\begin{equation}
  \|u^{k}\|_{1}\leq\|u^{0}\|,\quad k=1,\dots,M.\label{eq:stabilityOfSemiDiscrete}
\end{equation}
\end{theorem}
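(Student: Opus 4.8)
The plan is to use an energy argument: test the weak form \eqref{eq:weakform} (with $S=0$) against $v=u^{k+1}$ itself, and then handle the history sum on the right-hand side using the positivity and monotonicity properties of the coefficients $b_j=(j+1)^{1-\alpha}-j^{1-\alpha}$. First I would substitute $v=u^{k+1}$ into \eqref{eq:weakform}; the left-hand side becomes exactly $\|u^{k+1}\|_1^2$ by the definition \eqref{eq:EnergyNorm} of the energy norm, since $(u^{k+1},u^{k+1})+\alpha_0(\nabla u^{k+1},\nabla u^{k+1})=\|u^{k+1}\|_1^2$. The right-hand side is $\bigl((1-b_1)u^k+\sum_{j=1}^{k-1}(b_j-b_{j+1})u^{k-j}+b_k u^0,\;u^{k+1}\bigr)$, which by Cauchy--Schwarz on the plain $L^2$ inner product and the triangle inequality is bounded by $\bigl((1-b_1)\|u^k\|+\sum_{j=1}^{k-1}(b_j-b_{j+1})\|u^{k-j}\|+b_k\|u^0\|\bigr)\|u^{k+1}\|$.

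The key structural fact is that the coefficients in front of the past levels are all nonnegative and sum to one: indeed $b_j>b_{j+1}>0$ for $0<\alpha<1$ (so $1-b_1>0$, $b_j-b_{j+1}>0$, $b_k>0$), and a telescoping check gives $(1-b_1)+\sum_{j=1}^{k-1}(b_j-b_{j+1})+b_k = 1 - b_1 + b_1 - b_k + b_k = 1$. Thus the right-hand side is a convex combination, so it is at most $\bigl(\max_{0\le m\le k}\|u^m\|\bigr)\|u^{k+1}\|$, and after dividing by $\|u^{k+1}\|$ (the case $\|u^{k+1}\|=0$ being trivial) and using $\|u^{k+1}\|\le\|u^{k+1}\|_1$ on the left, we obtain $\|u^{k+1}\|_1\le\max_{0\le m\le k}\|u^m\|$. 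I would also treat the $k=0$ step separately from \eqref{eq:semi-discFork=00003D0}: testing against $u^1$ gives $\|u^1\|_1^2=\bigl((1-b_1)u^1+b_1u^0,u^1\bigr)$, hence $\|u^1\|\le(1-b_1)\|u^1\|+b_1\|u^0\|$, i.e. $b_1\|u^1\|\le b_1\|u^0\|$, so $\|u^1\|_1\le\|u^1\|\cdot(\text{reconcile})$... more cleanly, $\|u^1\|\le\|u^0\|$ and then $\|u^1\|_1$ is controlled as in the general step.

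The induction then closes: assuming $\|u^m\|\le\|u^m\|_1\le\|u^0\|$ for all $m\le k$, the bound above gives $\|u^{k+1}\|_1\le\max_{0\le m\le k}\|u^m\|\le\|u^0\|$, and in particular $\|u^{k+1}\|\le\|u^{k+1}\|_1\le\|u^0\|$, completing the step. The main obstacle — really the only subtle point — is verifying the sign and monotonicity of the $b_j$ (that $b_j$ is positive and strictly decreasing in $j$ for $0<\alpha<1$), which follows from concavity of $t\mapsto t^{1-\alpha}$: the increments $(j+1)^{1-\alpha}-j^{1-\alpha}$ are positive and decreasing. Once that is in hand, everything reduces to the convex-combination estimate and a routine induction, and no use of the spatial discretization or of the Bernstein/dual-Bernstein machinery is needed, since the statement concerns the continuous-in-space weak form.
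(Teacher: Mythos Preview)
Your proof is correct and follows essentially the same route as the paper's: test \eqref{eq:weakform} with $v=u^{k+1}$, apply Cauchy--Schwarz, use that the nonnegative coefficients $(1-b_1),\,(b_j-b_{j+1}),\,b_k$ sum to $1$, and close by induction on $k$. The only difference is cosmetic---you phrase the right-hand side as a convex combination bounded by $\max_{m\le k}\|u^m\|$, whereas the paper plugs the inductive hypothesis $\|u^m\|\le\|u^0\|$ directly into each term; also note that the paper's base case is simply $\|u^1\|_1^2=(u^0,u^1)$, which is cleaner than your somewhat tangled treatment via \eqref{eq:semi-discFork=00003D0}.
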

\begin{proof}
Let $v=u^{1}$ in (\ref{eq:weakform}). Then, 
\begin{equation*}
  (u^{1},u^{1})+\alpha_{0}(\nabla u^{1},\nabla u^{1})=(u^{0},u^{1}),
\end{equation*}
giving (\ref{eq:stabilityOfSemiDiscrete}) for $k=1,$ by  the definition
(\ref{eq:EnergyNorm}), the Schwarz inequality and the inequality
$\|v\|\leq\|v\|_{1}$. By mathematical induction, assume (\ref{eq:stabilityOfSemiDiscrete})
holds for $k=0,\dots,n.$ Let $v=u^{n+1}$ in (\ref{eq:weakform}),
i.e.,
\begin{align*}
 & (u^{n+1},u^{n+1})+\alpha_{0}(\nabla u^{n+1},\nabla u^{n+1})=(1-b_{1})(u^{n},u^{n+1})\\
 &   \quad\quad\quad\hfill+\sum_{j=1}^{n-1}(b_{j}-b_{j+1})(u^{n-j},u^{n+1})+b_{n}(u^{0},u^{n+1}).
\end{align*}
It is easy to see that the RHS coefficients in (\ref{eq: semi-discrete})
are positive. So we obtain 
\begin{align*}
\|u^{n+1}\|_{1} & \leq  (1-b_{1})\|u^{n}\|+\sum_{j=1}^{n-1}(b_{j}-b_{j+1})\|u^{n-j}\|+b_{n}\|u^{0}\|\\
 & \leq  \left((1-b_{1})+\sum_{j=1}^{n-1}(b_{j}-b_{j+1})+b_{n}\right)\|u^{0}\|=\|u^{0}\|.
\end{align*}
So the proof is done. 
\end{proof}

\begin{theorem}
\label{Th:ConvergenceForSemidiscrete}Let $u$ be the solution of
the equation (\ref{eq:main}) with conditions (\ref{IV})-(\ref{BVs})
and $u^{k}$ be the solution of the the semidiscrete problem (\ref{eq: semi-discrete}).
Then,
\begin{align}
 &   \|u(t_{k})-u^{k}\|_{1}\leq\frac{c_{u}}{1-\alpha}T^{\alpha}\tau^{2-\alpha},\quad0<\alpha<1,\label{eq:Error u(t_k)-u^k}\\
 &   \|u(t_{k})-u^{k}\|_{1}\leq c_{u}T\tau,\quad\qquad\quad\,as\,\alpha\rightarrow1.\label{eq:Error u-u^k alpha->1}
\end{align}
\end{theorem}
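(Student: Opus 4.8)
The plan is to control the global error $e^k := u(t_k) - u^k$ by combining the local truncation estimate \eqref{eq:TruncationErrorMulti} with the stability property established in Theorem \ref{Th:stabilityForThe-weak-semidiscrete}. First I would write the exact solution at $t_{k+1}$ as satisfying the same scheme as $u^{k+1}$ up to the consistency error: subtracting \eqref{eq: semi-discrete} from the identity obtained by plugging $u(t_{k+1})$ into \eqref{eq:AtTimek+1} and discretizing via \eqref{eq:4.1-1}, the error $e^{k+1}$ obeys
\begin{equation*}
  e^{k+1} - \alpha_0\kappa\Delta e^{k+1} = (1-b_1)e^k + \sum_{j=1}^{k-1}(b_j-b_{j+1})e^{k-j} + b_k e^0 + r^{k+1},
\end{equation*}
with $e^0 = 0$ since $u^0 = g = u(\cdot,0)$ exactly. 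This is precisely the semidiscrete recursion \eqref{eq: semi-discrete} with a forcing term $r^{k+1}$ and zero initial data, so the same energy argument as in Theorem \ref{Th:stabilityForThe-weak-semidiscrete} applies.

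Next I would run the induction: testing the error equation with $v = e^{k+1}$ in the weak form and using coercivity together with the Cauchy--Schwarz inequality and $\|v\| \le \|v\|_1$, one gets
\begin{equation*}
  \|e^{k+1}\|_1 \le (1-b_1)\|e^k\| + \sum_{j=1}^{k-1}(b_j-b_{j+1})\|e^{k-j}\| + b_k\|e^0\| + \|r^{k+1}\|.
\end{equation*}
Since the $b_j$-weights are nonnegative and telescope to $1$ (exactly as in the proof of Theorem \ref{Th:stabilityForThe-weak-semidiscrete}), an induction on $k$ gives $\|e^k\|_1 \le \sum_{m=1}^{k}\|r^m\|$. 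Then I would insert the bound $\|r^m\| \le \tilde c_u \tau^{2-\alpha}$ from combining \eqref{eq:rateInTime} with the $\tau^\alpha\Gamma(2-\alpha)$ scaling — more carefully, from \eqref{eq:TruncationErrorMulti} one has $|r^m| \le \tilde c_u \tau^{2-\alpha}\cdot \tau^{\alpha}$... here I would want to track the exponent precisely: the stated result \eqref{eq:Error u(t_k)-u^k} carries $\tau^{2-\alpha}$, so the cleanest route is to keep $\|r^m\| \le \mu^{-1}\,c_u\tau^{2-\alpha}\cdot\mu = $ (i.e.\ use the unscaled bound $|r_\tau^m|\le \tilde c_u\tau^{2-\alpha}$ from \eqref{eq:rateInTime} and note that the $\mu^{-1}$ factor in the RHS of \eqref{eq: semi-discrete} restores a $\tau^\alpha\Gamma(2-\alpha)$, leaving $\tau^{2}$ per step, which is what \eqref{eq:TruncationErrorMulti} asserts) — and then summing over $k \le M = T/\tau$ steps.

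For the $\tau^{2-\alpha}$ form \eqref{eq:Error u(t_k)-u^k}, the standard refinement is not to use $k$ equal bounds but to exploit that $\mu^{-1} = \tau^\alpha\Gamma(2-\alpha)$ and that $\sum_{m=1}^{k} 1 \le k \le T/\tau$, so $\|e^k\|_1 \le k\cdot\tilde c_u\tau^2 \le \tilde c_u T\tau$; to sharpen the $\tau$-power down to $\tau^{2-\alpha}$ one instead keeps a factor $\tau^{-\alpha}$ absorbed via $k\tau^{\alpha} \le (T/\tau)\tau^{\alpha} = T\tau^{\alpha-1}$, giving $\|e^k\|_1 \le \tilde c_u\, T^{\alpha}\tau^{2-\alpha}/(1-\alpha)$ once the constant $\tilde c_u$ from \citep{Deng} is written with its explicit $(1-\alpha)^{-1}$ dependence. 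The limit $\alpha\to1$ in \eqref{eq:Error u-u^k alpha->1} then follows either by the direct $k\tau^2 \le T\tau$ estimate (valid uniformly in $\alpha$) or by observing $T^\alpha\tau^{2-\alpha}\to T\tau$ while the $(1-\alpha)^{-1}$ blow-up is cancelled by the corresponding vanishing factor hidden in $c_u$ (since the L1 truncation constant $\tilde c_u$ actually scales like $(1-\alpha)$ near $\alpha=1$). The main obstacle I anticipate is bookkeeping the $\alpha$-dependence of the truncation constant carefully enough to produce the clean $\frac{c_u}{1-\alpha}T^\alpha\tau^{2-\alpha}$ bound and to justify the passage to the limit; the energy/stability part is a routine repetition of Theorem \ref{Th:stabilityForThe-weak-semidiscrete} applied to the error equation.
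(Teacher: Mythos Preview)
Your setup of the error equation and the energy estimate
\[
\|e^{k+1}\|_1 \le (1-b_1)\|e^k\| + \sum_{j=1}^{k-1}(b_j-b_{j+1})\|e^{k-j}\| + \|r^{k+1}\|
\]
are correct, and the crude induction $\|e^k\|_1 \le \sum_{m=1}^{k}\|r^m\|\le k\,c_u\tau^{2}\le c_u T\tau$ does yield \eqref{eq:Error u-u^k alpha->1}. The gap is in your attempt to upgrade this to the rate $\tau^{2-\alpha}$ in \eqref{eq:Error u(t_k)-u^k}. Writing $k\tau^{2}=k\tau^{\alpha}\cdot\tau^{2-\alpha}$ and then bounding $k\tau^{\alpha}\le (T/\tau)\tau^{\alpha}=T\tau^{\alpha-1}$ simply returns $T\tau$; no $\alpha$-dependence of the truncation constant $\tilde c_u$ can change the exponent of~$\tau$. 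The accumulation bound $\sum_{m\le k}\|r^m\|$ is genuinely of order $\tau$, not $\tau^{2-\alpha}$, so this route cannot reach \eqref{eq:Error u(t_k)-u^k}.

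What is missing is a sharper induction hypothesis that exploits the \emph{monotone decay} of the L1 weights $b_j=(j+1)^{1-\alpha}-j^{1-\alpha}$. The paper (following Lin--Xu \citep{Lin}) proves by induction that
\[
\|e^{k}\|_1 \le \frac{c_u}{b_{k-1}}\,\tau^{2},
\]
using in the inductive step that $b_{n-j-1}\ge b_{n}$ for all $j$, so every term on the right can be dominated by $\frac{c_u}{b_n}\tau^{2}$ and the convex weights $(1-b_1),(b_j-b_{j+1}),b_n$ still sum to $1$. Because $b_{k-1}^{-1}\sim k^{\alpha}/(1-\alpha)$ (by the mean value theorem applied to $t^{1-\alpha}$), this gives
\[
\|e^{k}\|_1 \le \frac{c_u}{1-\alpha}(k\tau)^{\alpha}\tau^{2-\alpha}\le \frac{c_u}{1-\alpha}T^{\alpha}\tau^{2-\alpha},
\]
which is exactly \eqref{eq:Error u(t_k)-u^k}. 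In short: your stability argument is fine, but the naive summation of local errors loses a full power of $\tau$; the $b_{k-1}^{-1}$ induction is the key idea you need to supply, and the $(1-\alpha)^{-1}$ in the final bound comes from estimating $b_{k-1}^{-1}$, not from the truncation constant.
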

\begin{proof}
The idea of the proof comes from \citep{Lin}. We first prove 
\begin{equation}
    \|u(t_{k})-u^{k}\|_{1}\leq\frac{c_{u}}{b_{k-1}}\tau^{2},\quad k=1,\dots,M.\label{eq:komaki}
\end{equation}
By (\ref{eq:main}) and (\ref{eq:semi-discFork=00003D0}), we have
\begin{equation*}
    (e^{1},v)+\alpha_{0}(\nabla e^{1},\nabla v)=(e^{0},v)+(r^{1},v),\quad\forall v\in H_{0}^{1}(\Omega),
\end{equation*}
in which $e^{k}:=u(t_{k})-u^{k}.$ For $v=e^{1}$ and by using $e^{0}=0$,
$\|v\|\leq\|v\|_{1}$ and (\ref{eq:TruncationErrorMulti}), we get
\begin{equation}
  \|e^{1}\|_{1}\leq c_{u}\tau^{2},\label{eq:Forj=00003D1}
\end{equation}
i.e., (\ref{eq:komaki}) holds for $k=1$. By induction, assume (\ref{eq:komaki})
holds for $k\leq n$. Using (\ref{eq:main}) and (\ref{eq: semi-discrete}),
we get
\begin{align*}
 &   (e^{n+1},v)+\alpha_{1}(\nabla e^{n+1},\nabla v)=(1-b_{1})(e^{n},v)\\
 &   \quad\quad\quad+\sum_{j=1}^{n-1}(b_{j}-b_{j+1})(e^{n-j},v)+b_{n}(e^{0},v)+(r^{n+1},v),\quad\forall v\in H_{0}^{1}(\Omega).
\end{align*}
For $v=e^{n+1}$, it reads as
\begin{align*}
\|e^{n+1}\|_{1}^{2} & \leq  (1-b_{1})\|e^{n}\|\|e^{n+1}\|_{1}+\sum_{j=1}^{n-1}(b_{j}-b_{j+1})\|e^{n-j}\|\|e^{n+1}\|_{1}+\|r^{n+1}\|\|e^{n+1}\|_{1},\\
\Rightarrow\|e^{n+1}\|_{1} & \leq  (1-b_{1})\frac{c_{u}}{b_{n-1}}\tau^{2}+\sum_{j=1}^{n-1}(b_{j}-b_{j+1})\frac{c_{u}}{b_{n-j-1}}\tau^{2}+c_{u}\tau^{2}\\
 & \leq  \left((1-b_{1})+\sum_{j=1}^{n-1}(b_{j}-b_{j+1})+b_{n}\right)\frac{c_{u}}{b_{n}}\tau^{2}=\frac{c_{u}}{b_{n}}\tau^{2},
\end{align*}
proving (\ref{eq:komaki}) for $k=n+1$ that completes the proof of
(\ref{eq:komaki}). 

Consider $f(t)=t^{1-\alpha}$, then there exists a$\xi,$ $k-1<\xi<k\leq M$
such that
\begin{equation*}
  b_{k-1}\tau^{-\alpha}=\frac{(k\tau)^{1-\alpha}-(\tau(k-1))^{1-\alpha}}{\tau}=(1-\alpha)(\xi\tau)^{-\alpha}\geq(1-\alpha)(k\tau)^{-\alpha}\geq(1-\alpha)(T)^{-\alpha},
\end{equation*}
which gives
\begin{equation}
  \frac{c_{u}}{b_{k-1}}\tau^{2}\leq\frac{c_{u}}{1-\alpha}T^{\alpha}\tau^{2-\alpha}.\label{eq:boundFor b_k}
\end{equation}
Now using this along with (\ref{eq:komaki}) proves (\ref{eq:Error u(t_k)-u^k}). 

In order to derive (\ref{eq:Error u-u^k alpha->1}), we first prove
\begin{equation}
  \|u(t_{k})-u^{k}\|_{1}\leq c_{u}k\tau^{2},\quad k=1,\dots,M.\label{eq:komaki2}
\end{equation}
 By (\ref{eq:Forj=00003D1}), the inequality (\ref{eq:komaki2}) holds
for $k=1$. Assume (\ref{eq:komaki2}) holds for $k=1,\dots,n,\:n\leq M-1$.
Then, from (\ref{eq:main}), (\ref{eq: semi-discrete}) and (\ref{eq:TruncationErrorMulti}),
we obtain
\begin{align*}
\|e^{n+1}\|_{1} & \leq  (1-b_{1})\|e^{n}\|+\sum_{j=1}^{n-1}(b_{j}-b_{j+1})\|e^{n-j}\|+\|r^{n+1}\|\\
 & \leq  \left((1-b_{1})\frac{n}{n+1}+\sum_{j=1}^{n-1}(b_{j}-b_{j+1})\frac{n-j}{n+1}+\frac{1}{(n+1)}\right)c_{u}(n+1)\tau^{2}\\
 & \leq  \left((1-b_{1})\frac{n}{n+1}+(b_{1}-b_{n})\frac{n}{n+1}-(b_{1}-b_{n})\frac{1}{n+1}+\frac{1}{(n+1)}\right)c_{u}(n+1)\tau^{2}\\
 & =  \left(1-b_{n}\frac{n}{n+1}-(b_{1}-b_{n})\frac{1}{n+1}\right)c_{u}(n+1)\tau^{2}\leq c_{u}(n+1)\tau^{2}.
\end{align*}
So (\ref{eq:komaki2}) holds for $k=n+1$. From $k\tau\leq T$ and
(\ref{eq:komaki2}), we get (\ref{eq:Error u-u^k alpha->1}).
\end{proof}

\subsection{Convergence of the full discretization scheme}

Let $\pi_{N}^{1,0}$ be the $H^{1}$-orthogonal projection operator
from $H_{0}^{1}(\Omega)$ into $(\mathbb{P}_{N}^{0})^{2}$ associated
with the energy norm $\|\cdot\|_{1}$ defined in (\ref{eq:EnergyNorm}).
Due to the equivalence of this norm with the standard $H^{1}$ norm,
we have the following error estimation {[}\citealp{Lin}; Relation
(4.3){]}
\begin{equation}
  \|u-\pi_{N}^{1,0}u\|_{1}\leq cN{}^{1-m}\|u\|_{m},\quad u\in H_{0}^{m}(\Omega)\cap H_{0}^{1}(\Omega),\,m\geq1.\label{eq:projectionError}
\end{equation}

The idea of the proof for the following result comes from the paper
\citep{Lin}.
\begin{theorem}
Let $u^{k},k=0,\dots,M$ be the solution of the variational formulation
(\ref{eq:weakform}) and $u_{N}^{k}$ be the solution of the scheme
(\ref{eq:Galerkin}), assuming $u^{0}=\pi_{N}^{1,0}u^{0}$ and $u^{k}\in H^{m}(\Omega)\cap H_{0}^{1}(\Omega)$
for some $m>1.$ Then,
\begin{align}
 & \|u^{k}-u_{N}^{k}\|_{1}\leq\frac{c}{1-\alpha}\tau^{-\alpha}N{}^{1-m}\max_{0\leq j\leq k}\|u^{j}\|_{m},\quad0<\alpha<1,\nonumber \\
 &  \|u^{k}-u_{N}^{k}\|_{1}\leq cN^{1-m}\sum_{j=0}^{k}\|u^{j}\|_{m},\hfill\qquad\,\quad\alpha\rightarrow1,\label{eq:Error u^k-(u_N)^k}
\end{align}
for $k=1,\dots,M$, where $c$ depends only on $T^{\alpha}$.
\end{theorem}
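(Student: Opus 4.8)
The plan is to combine the time-discretization error analysis of Theorem~\ref{Th:ConvergenceForSemidiscrete} with the spatial projection error estimate~(\ref{eq:projectionError}), using the energy estimate of Theorem~\ref{Th:stabilityForThe-weak-semidiscrete} to propagate errors across time steps. First I would split the error by introducing the projection: write $u^{k}-u_{N}^{k}=(u^{k}-\pi_{N}^{1,0}u^{k})+(\pi_{N}^{1,0}u^{k}-u_{N}^{k})=:\eta^{k}+\xi_{N}^{k}$, where $\eta^{k}$ is the interpolation/projection part controlled directly by~(\ref{eq:projectionError}) and $\xi_{N}^{k}\in(\mathbb{P}_{N}^{0})^{2}$ is the discrete part. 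The triangle inequality reduces the task to bounding $\|\xi_{N}^{k}\|_{1}$.

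Next I would derive the error equation for $\xi_{N}^{k}$. Subtracting the Galerkin scheme~(\ref{eq:Galerkin}) from the weak form~(\ref{eq:weakform}) tested against $v_{N}\in(\mathbb{P}_{N}^{0})^{2}$, and using the defining orthogonality of $\pi_{N}^{1,0}$ with respect to the bilinear form $(\cdot,\cdot)_{1}$ to kill the $a(\eta^{k+1},v_{N})$ term, one obtains
\begin{equation*}
(\xi_{N}^{k+1},v_{N})_{1}=(1-b_{1})(\xi_{N}^{k}+\rho^{k},v_{N})+\sum_{j=1}^{k-1}(b_{j}-b_{j+1})(\xi_{N}^{k-j}+\rho^{k-j},v_{N})+b_{k}(\xi_{N}^{0}+\rho^{0},v_{N}),
\end{equation*}
where $\rho^{j}=\eta^{j}-\pi_{N}^{1,0}\eta^{j}$-type lower-order terms arising because the $L^{2}$ inner product of $\eta^{j}$ against $v_{N}$ need not vanish; here I must be careful to use $\|\eta^{j}\|\le\|\eta^{j}\|_{1}\le cN^{1-m}\|u^{j}\|_{m}$ to absorb them. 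Then, exactly as in the proof of Theorem~\ref{Th:stabilityForThe-weak-semidiscrete}, choose $v_{N}=\xi_{N}^{k+1}$, use Cauchy--Schwarz, the bound $\|v\|\le\|v\|_{1}$, and the crucial fact that the coefficients $1-b_{1}$, $b_{j}-b_{j+1}$, $b_{k}$ are nonnegative and sum to $1$; together with $\xi_{N}^{0}=0$ (from the hypothesis $u^{0}=\pi_{N}^{1,0}u^{0}$) this gives a recursive inequality of the form $\|\xi_{N}^{k+1}\|_{1}\le\sum(\text{nonneg. coeffs})\cdot\|\xi_{N}^{\cdot}\|_{1}+cN^{1-m}\max_{j}\|u^{j}\|_{m}$.

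Finally I would resolve the recursion. For the case $\alpha\to1$, a direct induction (mimicking the derivation of~(\ref{eq:komaki2})) yields $\|\xi_{N}^{k}\|_{1}\le cN^{1-m}\sum_{j=0}^{k}\|u^{j}\|_{m}$. For $0<\alpha<1$, I would instead follow the $b_{k-1}$-weighted induction used for~(\ref{eq:komaki}): prove $\|\xi_{N}^{k}\|_{1}\le\frac{c}{b_{k-1}}N^{1-m}\max_{0\le j\le k}\|u^{j}\|_{m}$ by induction, then apply the mean-value estimate $b_{k-1}\ge(1-\alpha)(k\tau)^{-\alpha}\ge(1-\alpha)T^{-\alpha}$ exactly as in~(\ref{eq:boundFor b_k}) to convert the $1/b_{k-1}$ factor into $\frac{1}{1-\alpha}\tau^{-\alpha}$ (absorbing $T^{\alpha}$ into $c$). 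Adding back $\|\eta^{k}\|_{1}\le cN^{1-m}\|u^{k}\|_{m}$ completes the bound. The main obstacle I expect is the bookkeeping of the consistency terms $\rho^{j}$: one must verify that the $L^{2}$-pairings of the projection remainders against the test functions are genuinely of order $N^{1-m}$ and do not accumulate a spurious factor, and that the induction hypothesis is stated with the right ($b_{k-1}$-weighted) form so that the telescoping of the L1 coefficients goes through cleanly.
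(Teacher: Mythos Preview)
Your proposal is correct and follows essentially the same route as the paper: split via the $H^{1}$-projection, use the orthogonality $(u^{k+1}-\pi_{N}^{1,0}u^{k+1},v_{N})_{1}=0$ to move the projection into the left-hand side, choose $v_{N}=\xi_{N}^{k+1}$, and then resolve the resulting recursion by the same $b_{k-1}$-weighted induction (and the same mean-value bound (\ref{eq:boundFor b_k})) as in Theorem~\ref{Th:ConvergenceForSemidiscrete}. The only organizational difference is that the paper runs the induction directly on the full error $e_{N}^{k}=u^{k}-u_{N}^{k}$ rather than on $\xi_{N}^{k}$: after using the projection orthogonality on the left, the right-hand side naturally contains $(e_{N}^{j},v_{N})$, not $(\xi_{N}^{j}+\rho^{j},v_{N})$, so one obtains the clean recursion
\[
\|e_{N}^{k+1}\|_{1}\le(1-b_{1})\|e_{N}^{k}\|+\sum_{j=1}^{k-1}(b_{j}-b_{j+1})\|e_{N}^{k-j}\|+b_{k}\|e_{N}^{0}\|+cN^{1-m}\|u^{k+1}\|_{m},
\]
with a single projection source term per step; this sidesteps the ``$\rho^{j}$ bookkeeping'' you flagged as the main obstacle. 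Your version with $\xi_{N}^{k}$ also works, but you should note that the correct correction is simply $\rho^{j}=\eta^{j}$ (not $\eta^{j}-\pi_{N}^{1,0}\eta^{j}$), coming from $(e_{N}^{j},v_{N})=(\xi_{N}^{j},v_{N})+(\eta^{j},v_{N})$.
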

\begin{proof}
We have $(u^{k+1}-\pi_{N}^{1,0}u^{k+1},v_{N})_{1}=0,\,\forall v_{N}\in(\mathbb{P}_{N}^{0})^{2}$
by the projection operator. By definition of the norm (\ref{eq:EnergyNorm}),
we get 
\begin{equation*}
  (\pi_{N}^{1,0}u^{k+1},v_{N})+\alpha_{1}(\nabla\pi_{N}^{1,0}u^{k+1},\nabla v_{N})=(u^{k+1},v_{N})+\alpha_{1}(\nabla u^{k+1},\nabla v_{N}),\quad\forall v_{N}\in(\mathbb{P}_{N}^{0})^{2}.
\end{equation*}
By the weak form (\ref{eq:weakform}), the RHS of the above equation
is replaced as 
\begin{align}
 &   (\pi_{N}^{1,0}u^{k+1},v_{N})+\alpha_{1}(\nabla\pi_{N}^{1,0}u^{k+1},\nabla v_{N})=(1-b_{1})(u^{k},v_{N})\nonumber \\
 &   \quad\quad\quad+\sum_{j=1}^{k-1}(b_{j}-b_{j+1})(u^{k-j},v_{N})+b_{k}(u^{0},v_{N}),\quad\forall v_{N}\in(\mathbb{P}_{N}^{0})^{2}.\label{eq:11}
\end{align}
Subtracting (\ref{eq:11}) from (\ref{eq:Galerkin}), we have
\begin{align*}
 &   (\tilde{e}_{N}^{k+1},v_{N})+\alpha_{1}(\frac{\partial\tilde{e}_{N}^{k+1}}{\partial x},\frac{\partial v_{N}}{\partial x})=(1-b_{1})(e_{N}^{k},v_{N})\\
 &   \quad\quad\quad+\sum_{j=1}^{k-1}(b_{j}-b_{j+1})(e_{N}^{k-j},v_{N})+b_{k}(e_{N}^{0},v_{N}),\quad\forall v_{N}\in(\mathbb{P}_{N}^{0})^{2},
\end{align*}
where $e_{N}^{k+1}=u^{k+1}-u_{N}^{k+1}$ and $\tilde{e}_{N}^{k+1}=\pi_{N}^{1,0}u^{k+1}-u_{N}^{k+1}$.
Let $v_{N}=\tilde{e}_{N}^{k+1}$, then
\begin{equation*}
  \|\tilde{e}_{N}^{k+1}\|_{1}\leq(1-b_{1})\|e_{N}^{k}\|+\sum_{j=1}^{k-1}(b_{j}-b_{j+1})\|e_{N}^{k-j}\|+b_{k}\|e_{N}^{0}\|.
\end{equation*}
With $\|e_{N}^{k+1}\|_{1}\leq\|\tilde{e}_{N}^{k+1}\|_{1}+\|u^{k+1}-\pi_{N}^{1,0}u^{k+1}\|_{1}$,
we obtain
\begin{equation*}
  \|e_{N}^{k+1}\|_{1}\leq(1-b_{1})\|e_{N}^{k}\|+\sum_{j=1}^{k-1}(b_{j}-b_{j+1})\|e_{N}^{k-j}\|+b_{k}\|e_{N}^{0}\|+cN^{1-m}\|u^{k+1}\|.
\end{equation*}
As in the proof of Theorem \ref{Th:ConvergenceForSemidiscrete}, it
is first proved by induction that:
\begin{align*}
 &   \|e_{N}^{k+1}\|_{1}\leq\frac{1}{b_{k-1}}\max_{0\leq j\leq k}\|u^{j}-\pi_{N}^{1,0}u^{j}\|_{1},\quad0<\alpha<1,\\
 &   \|e_{N}^{k+1}\|_{1}\leq\sum_{j=0}^{k}\|u^{j}-\pi_{N}^{1,0}u^{j}\|_{1},\quad\alpha\rightarrow1,
\end{align*}
 for $0\leq k\leq M.$ Then, by using (\ref{eq:boundFor b_k}) and
the projection error (\ref{eq:projectionError}) the desired result
is derived. 
\end{proof}

The following theorem is obtained by the triangle inequality $||u(\cdot,t_{k})-u_{N}^{k}||_{1}\leq||u(\cdot,t_{k})-u^{k}||_{1}+||u^{k}-u_{N}^{k}||_{1}$
along with the inequalities (\ref{eq:Error u(t_k)-u^k}) and (\ref{eq:Error u^k-(u_N)^k}).
\begin{theorem}
Let $u$ be the solution of the problem (\ref{eq:main}) with the
initial and boundary conditions given by (\ref{IV})-(\ref{BVs})
and $u_{N}^{k}$ be the solution of the scheme (\ref{eq:Galerkin}).
Then, assuming $u_{N}^{0}=\pi_{N}^{1,0}u^{0}$ and $u\in H^{m}(\Omega)\cap H_{0,}^{1}(\Omega)$,
we have
\begin{align}
 &   \|u(t_{k})-u_{N}^{k}\|_{1}\leq\frac{CT^{\alpha}}{1-\alpha}(c_{u}\tau^{2-\alpha}+c\tau^{-\alpha}N{}^{1-m}\sup_{0<t<T}\|u(x,t)\|_{m}),\quad k\leq M,\quad0<\alpha<1,\label{eq:SpatialRate}\\
 &   \|u(t_{k})-u_{N}^{k}\|_{1}\leq T^{\alpha}(c_{u}\tau+c\tau^{-1}N{}^{1-m}\sup_{0<t<T}\|u(x,t)\|_{m}),\quad k\leq M,\quad\alpha\rightarrow1.\nonumber 
\end{align}
The constants $C$ and $c$ are independent of $\tau$, $T$, $N$.
\end{theorem}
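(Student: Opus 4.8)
The plan is to combine the two error estimates already proved with a triangle inequality, exactly as the preamble announces. First I would write
\[
  \|u(\cdot,t_k)-u_N^k\|_1\le\|u(\cdot,t_k)-u^k\|_1+\|u^k-u_N^k\|_1,
\]
where $u^k$ is the solution of the semidiscrete problem \eqref{eq: semi-discrete} and $u_N^k$ the solution of the fully discrete Galerkin scheme \eqref{eq:Galerkin}. The first term on the right is bounded by the time-discretization estimate \eqref{eq:Error u(t_k)-u^k} from Theorem~\ref{Th:ConvergenceForSemidiscrete}, namely $\frac{c_u}{1-\alpha}T^\alpha\tau^{2-\alpha}$ in the case $0<\alpha<1$ (and by $c_uT\tau$ as $\alpha\to1$). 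The second term is bounded by the spatial estimate \eqref{eq:Error u^k-(u_N)^k}, i.e. $\frac{c}{1-\alpha}\tau^{-\alpha}N^{1-m}\max_{0\le j\le k}\|u^j\|_m$ (and by $cN^{1-m}\sum_{j=0}^k\|u^j\|_m$ as $\alpha\to1$).

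Next I would pass from the discrete-in-time norms $\|u^j\|_m$ appearing in the spatial estimate to the continuous quantity $\sup_{0<t<T}\|u(x,t)\|_m$. Since $u^j$ is an approximation of $u(\cdot,t_j)$ and the stability bound of Theorem~\ref{Th:stabilityForThe-weak-semidiscrete} together with \eqref{eq:Error u(t_k)-u^k} controls $u^j$ in terms of the exact solution, each $\|u^j\|_m$ is bounded (up to a constant independent of $\tau$, $N$, $k$) by $\sup_{0<t<T}\|u(\cdot,t)\|_m$; this turns $\max_{0\le j\le k}\|u^j\|_m$ into the stated supremum and, in the $\alpha\to1$ case, absorbs the sum $\sum_{j=0}^k$ of $k\le M$ terms into the factor $T^\alpha$ (using $k\tau\le T$). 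Then I would simply add the two bounds, collect the common prefactor $\frac{T^\alpha}{1-\alpha}$ (respectively $T^\alpha$), relabel the various constants $c_u$, $c$, $C$ so that they are independent of $\tau$, $T$, $N$, and arrive at \eqref{eq:SpatialRate}.

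The only genuine subtlety — and what I expect to be the main obstacle — is the bookkeeping of constants and the justification that $\|u^j\|_m$ may be replaced by $\sup_{0<t<T}\|u(\cdot,t)\|_m$ uniformly in $j$ and $\tau$: one must check that the constant absorbing the difference $\|u^j\|_m-\|u(\cdot,t_j)\|_m$ does not secretly depend on $\tau$ (it does not, because the time-discretization error is $O(\tau^{2-\alpha})$ in the energy norm and the regularity is assumed on $u$ itself), and that the factor $k$ appearing for $\alpha\to1$ is correctly converted using $k\le M=T/\tau$, which is precisely what produces the $\tau^{-1}$ in the second line of \eqref{eq:SpatialRate}. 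Everything else is a routine application of the triangle inequality and the two previously established theorems.
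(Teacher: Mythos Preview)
Your approach is exactly the one the paper uses: the proof there is a one-line application of the triangle inequality $\|u(\cdot,t_k)-u_N^k\|_1\le\|u(\cdot,t_k)-u^k\|_1+\|u^k-u_N^k\|_1$ together with the two previously established estimates \eqref{eq:Error u(t_k)-u^k} and \eqref{eq:Error u^k-(u_N)^k}. You are in fact more careful than the paper in flagging the passage from $\max_j\|u^j\|_m$ to $\sup_{0<t<T}\|u(\cdot,t)\|_m$ (and the conversion $k\le T/\tau$ in the $\alpha\to1$ case); the paper simply asserts the final bound without commenting on this replacement.
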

It is seen that the method has the so-called spectral convergence
in space and the order of convergence $O(\tau^{2-\alpha})$ in time.

\section{\label{sec:Numerical-examples}Numerical examples}

Here, some numerical experiments are provided to show the accuracy
of the proposed method. For the computations, we use Maple 18. on
a laptop with CPU core i3 1.9 GHz and RAM 4 running Windows 8.1 platform.
To compute the errors, we use the discrete $L^{2}$ and $L^{\infty}$
errors defined as
\begin{align*}
 &   L^{2}\approx\left(\frac{1}{\mathcal{N}^{2}}\sum_{i,j=0}^{\mathcal{N}-1}{|u(x_{i},y_{j},t_{m})-u_{N}^{m}(x_{i},y_{j})|^{2}}\right)^{1/2},\\
 &   L^{\infty}\approx\max_{0\leq i,j\leq\mathcal{N}}{|u(x_{i},y_{j},t_{m})-u_{N}^{m}(x_{i},y_{j})|},
\end{align*}
respectively, where $u$ is the exact solution of the problem (\ref{eq:main})-(\ref{BVs}),
$u_{N}^{m}$ is the approximation solution (\ref{eq:ApproxSol}) at
$t=t_{m}=m\tau,$ $x_{i}=\frac{i}{\mathcal{N}},\,y_{j}=\frac{j}{N}$
and $\mathcal{N}=100$. Also, the convergence rates in space and time
are respectively computed by
\begin{equation*}
  \text{rate}_{N_{i}}=\frac{\log{\frac{E(N_{i},\tau)}{E(N_{i-1},\tau)}}}{\log{\frac{N_{i-1}}{N_{i}}}},\quad\text{rate}_{\tau_{i}}=\frac{\log{\frac{E(N,\tau_{i})}{E(N,\tau_{i-1})}}}{\log{\frac{\tau_{i}}{\tau_{i-1}}}},
\end{equation*}
where $E(h,\tau)$ is the error with $h=1/N$ where $N$ stands for
the dimension of basis and $\tau$ is the time-step size. However,
as it is common in the literature, we will show the spectral convergence
of the proposed method by logarithmic scaled error plots.

It is worth to mention that as we derived the operational matrices
in Section \ref{subsec:Transformation-matrices-and} with special
structures, the proposed method finally leads to the linear system
(\ref{eq:TensorProdSys}) that is sparse and banded. To see the sparsity
and bandedness, the nonzero entries of the coefficinet matrix of (\ref{eq:TensorProdSys})
are depicted in Fig. \ref{fig:The-coefficient-matrix} using \textit{sparsematrixplot}
command of Maple. It is seen that the density decreases rapidly as
$N$ increases.

\begin{figure}
\centering
\includegraphics[scale=0.4]{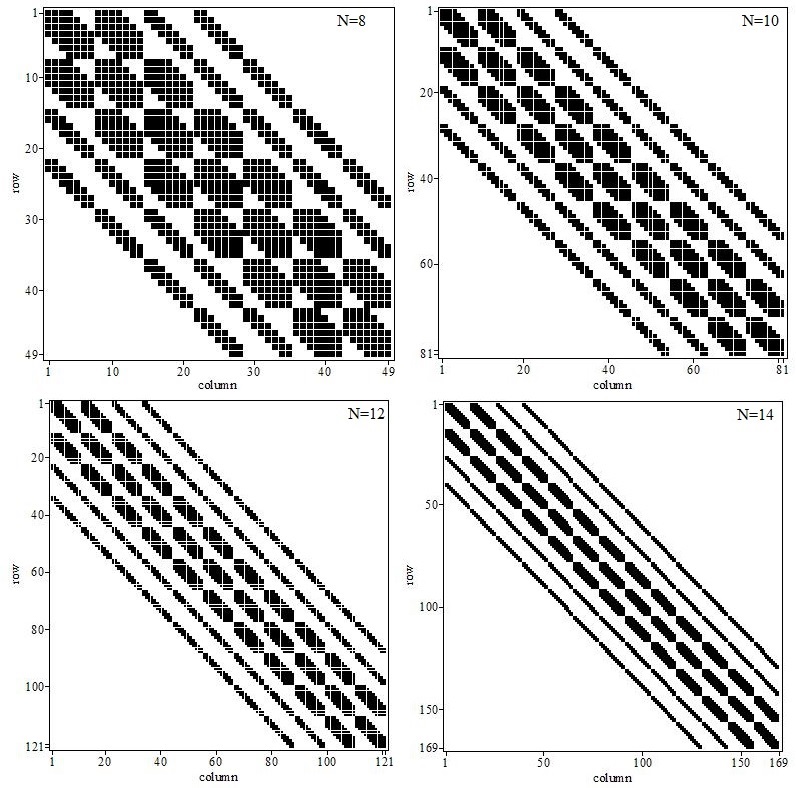}
\par
\centering\caption{\label{fig:The-coefficient-matrix}The coefficient matrix of the linear
system (\ref{eq:TensorProdSys}).}
\end{figure}

\begin{example}
\label{exa:1 sin(pi*x)sin(pi*y)t^2}Consider the problem (\ref{eq:main})
with $\kappa=1$ and the exact solution $u(x,y,t)=\sin{(\pi x)}\sin{(\pi y)}t^{2}$\@.
Table \ref{tab:SpatialRateExample1} shows the convergence of the
method in space for $\tau=\Delta t=0.01$ for fractional orders $\alpha=0.25,0.50,0.75$.
$N_{x}$ and $N_{y}$ stand for the number of basis in $x$ and $y$
direction. Figure \ref{fig:ex1} demonstrates the logarithmic scale
error plot in terms of $H^{1}$-norm for $\alpha=1/2$ for some $t$'s.
It is seen that the method preserves the spectral convergence at different
time rows $t<1$ and $t>1$. Table \ref{tab:timeRate} reports the
convergence in time by considering $N_{x}=N_{y}=N=8$ as $\tau$ decreases
at time rows $t=0.1$ and $t=1$. It verifies the $O(\tau^{2-\alpha})$
temporal rate of convergence. 
\end{example}
\begin{figure}
\centering
\includegraphics[scale=0.7]{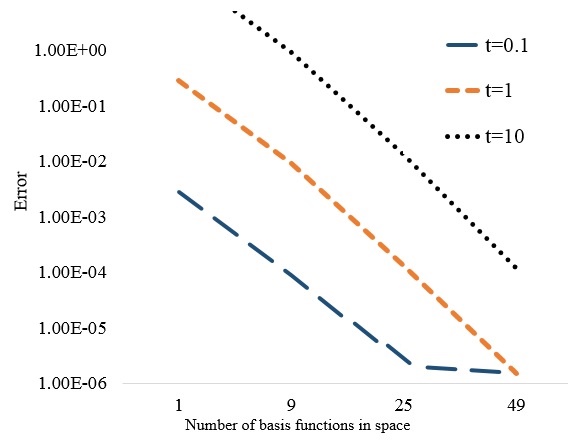}
\centering\caption{\label{fig:ex1}Convergence of the spectral method (\ref{eq:TensorProdSys})
in space with $\tau=0.001$.}
\end{figure}

\begin{table}[h]
\centering\caption{\label{tab:SpatialRateExample1} Convergence in space at $t=1$ for
Example \ref{exa:1 sin(pi*x)sin(pi*y)t^2}.}

\centering{}%
\begin{tabular}{ccccccc}
\hline 
 & \multicolumn{2}{c}{$\alpha=0.25$} & \multicolumn{2}{c}{$\alpha=0.5$} & \multicolumn{2}{c}{$\alpha=0.75$}\tabularnewline
\hline 
$N_{x}=N_{y}$ & $L^{\infty}$ & $H^{1}$ & $L^{\infty}$ & $H^{1}$ & $L^{\infty}$ & $H^{1}$\tabularnewline
\hline 
2 & 7.53E-02 & 2.81E-01 & 7.52E-02 & 2.81E-01 & 7.49E-02 & 2.81E-01\tabularnewline
4 & 1.74E-03 & 8.91E-03 & 1.72E-03 & 8.91E-03 & 1.63E-03 & 8.91E-03\tabularnewline
6 & 1.78E-05 & 1.34E-04 & 3.01E-05 & 1.43E-04 & 9.98E-05 & 2.86E-04\tabularnewline
8 & 3.67E-06 & 8.75E-06 & 2.25E-05 & 5.15E-05 & 2.79E-06 & 2.54E-04\tabularnewline
\hline 
\end{tabular}
\end{table}

\begin{example}
\label{exa:ex2NoSourse}To see the method works for the case in which
there is no source term, consider the problem (\ref{eq:main})-(\ref{BVs})
with the initial condition $u(x,y,0)=x(x-1)\sin{(2\pi y)}$, $\kappa=1$
and no source term \citep{Yang2014}. 

The spectral convergence in space is seen from Fig. \ref{fig:ex2 conv in space}
in which the time step length is considered to be $\tau=0.01.$ The
solution with $N_{x}=N_{y}=10$ is treated as the exact solution.
The errors are reported at $t=1$ with $H^{1}$-norm. 

The numerical examples present the spectral convergence in space and
fixed convergence of $O(\tau^{2-\alpha})$ in time confirming the
theoretical claims. It should be noted that we have used the eight
point Gauss-Legendre quadrature rule to perform the integrals (\ref{eq:integralsOFf})
in the right hand side of the linear system (\ref{eq:TensorProdSys}).
\end{example}
\begin{table}
\centering\caption{\label{tab:timeRate}Error and temporal rate of convergence at $t=1$
for Example \ref{exa:1 sin(pi*x)sin(pi*y)t^2}. }

\noindent\resizebox{\textwidth}{!}{%
\centering{}%
\begin{tabular}{ccccccccccccc}
\hline 
 & \multicolumn{4}{c}{$\alpha=0.25$} & \multicolumn{4}{c}{$\alpha=0.5$} & \multicolumn{4}{c}{$\alpha=0.75$}\tabularnewline
\hline 
 & \multicolumn{2}{c}{$t=0.1$} & \multicolumn{2}{c}{$t=1$} & \multicolumn{2}{c}{$t=0.1$} & \multicolumn{2}{c}{$t=1$} & \multicolumn{2}{c}{$t=0.1$} & \multicolumn{2}{c}{$t=1$}\tabularnewline
\hline 
$M$ & $H^{1}$ & rate & $H^{1}$ & rate & $H^{1}$ & rate & $H^{1}$ & rate & $H^{1}$ & rate & $H^{1}$ & rate\tabularnewline
\hline 
10 & 2.90E-04 &  & 4.21E-04 &  & 1.16E-03 &  & 1.55E-03 &  & 3.27E-03 &  & 4.46E-03 & \tabularnewline
20 & 9.93E-05 & 1.55 & 1.32E-04 & 1.67 & 4.60E-04 & 1.33 & 5.61E-04 & 1.47 & 1.54E-03 & 1.09 & 1.89E-03 & 1.24\tabularnewline
40 & 3.27E-05 & 1.60 & 4.12E-05 & 1.68 & 1.73E-04 & 1.41 & 2.01E-04 & 1.48 & 6.87E-04 & 1.16 & 7.95E-04 & 1.25\tabularnewline
80 & 1.05E-05 & 1.64 & 1.27E-05 & 1.70 & 6.35E-05 & 1.45 & 7.18E-05 & 1.49 & 2.97E-04 & 1.21 & 3.35E-04 & 1.25\tabularnewline
160 & 3.31E-06 & 1.67 & 4.05E-06 & 1.65 & 2.30E-05 & 1.47 & 2.56E-05 & 1.49 & 1.26E-04 & 1.24 & 1.41E-04 & 1.25\tabularnewline
\hline 
\end{tabular}}
\end{table}

\begin{figure}
\centering
\includegraphics[scale=0.7]{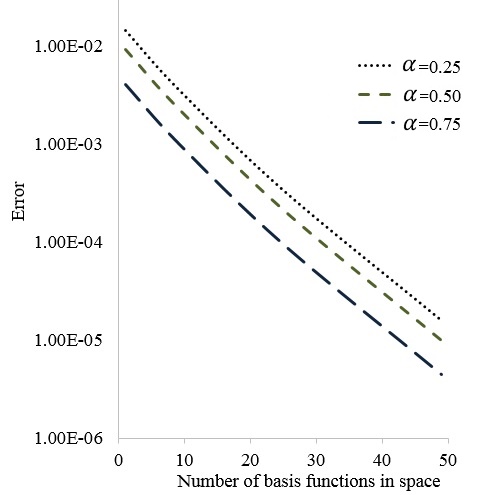}
\centering\caption{\label{fig:ex2 conv in space}Convergence in space for some fractional
orders with $\tau=0.01$.}
\end{figure}

\section{\label{sec:Con}Conclusion}

In this paper, some new aspects of the dual Bernstein polynomials
have been discussed. A suitable compact combinations of these polynomials
has been derived for developing a dual-Petrov-Galerkin variational
formulation for the numerical simulation of the two-dimensional subdiffusion
equation. It was shown that the method leads to sparse linear systems.
The illustrated numerical examples have been provided to show the
accuracy of the method. It is important to note that the transformation
matrices and the operational matrix for differentiation of dual Bernstein
polynomials that have been obtained in this work can be used similarly
for developing Bernstein-based dual-Petrov-Galerkin Galerkin methods
for other fractional partial differential equations on bounded domains.

\end{document}